\theoremstyle{remark}
\newcommand{\cqed}{\ensuremath{\lhd}}
\newcounter{dummyc}
\crefname{fact}{Fact}{Facts}
\crefname{conjecture}{Conjecture}{Conjectures}
\newcommand{\ERCagreement}{This paper is part of projects that have received funding from the European Research Council (ERC) under the European Union's Horizon 2020 research and innovation programme (grant agreements No 810115 -- {\sc Dynasnet}.\\
	\includegraphics[width=.25\textwidth]{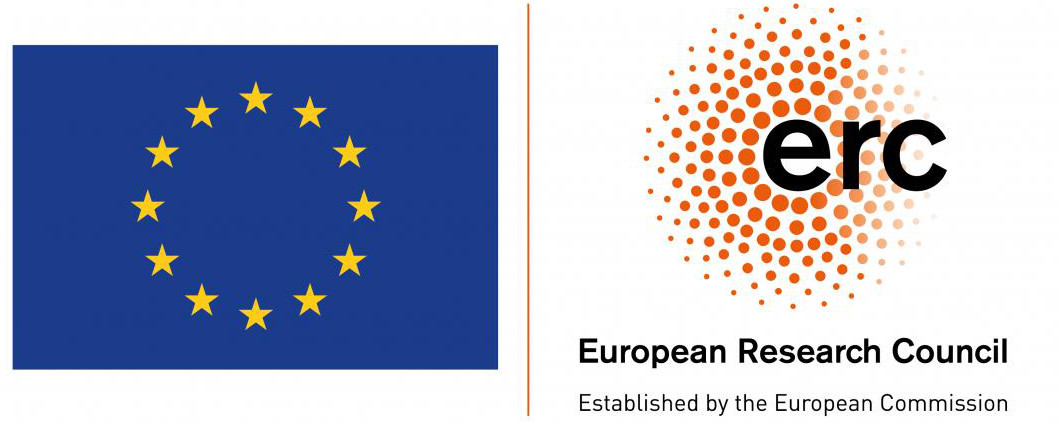}}
\newtheorem{theorem}{Theorem}[section]
\newtheorem{ext_theorem}[theorem]{Theorem}
\newtheorem{corollary}{Corollary}[section]
\newtheorem{lemma}{Lemma}[section]
\newtheorem{problem}{Problem}
\newtheorem{example}{Example}
\newtheorem{fact}[theorem]{Fact}
\crefname{figure}{Figure}{Figures}
\crefname{theorem}{Theorem}{Theorems}
\crefname{definition}{Definition}{Definitions}
\crefname{ext_theorem}{Theorem}{Theorems}
\crefname{corollary}{Corollary}{Corollaries}
\crefname{lemma}{Lemma}{Lemmas}
\crefname{section}{Section}{Sections}
\DeclareMathOperator{\svm}{svm}
\newcommand{\GF}[1]{\ensuremath{\mathbb F}_{#1}}
\DeclareMathOperator{\dist}{\text{dist}}
\begin{document}

	\title{Shallow vertex minors, stability, and dependence}
	\thanks{\ERCagreement}
	\author{Hector Buffi\`ere}\address{\'Ecole Normale Sup\'erieure, Paris, France}\email{hector.buffiere@ens.fr}
	\author{Eunjung Kim}\address{KAIST, Daejeon, South Korea \\ and CNRS, Paris, France}\email{eunjungkim78@gmail.com}
	\author{Patrice Ossona de Mendez}\address{Centre d'Analyse et de Math\'ematiques Sociales (CNRS, UMR 8557), Paris, France\\ and Computer Science Institute of Charles University, Praha, Czech Republic}\email{pom@ehess.fr}
	\date{}
	\keywords{
		graph, local complementation, shallow vertex minor, dependence, NIP, stability, twin-width, binary relational structure
}
\begin{abstract}
	\emph{Stability} and \emph{dependence} are model-theoretic notions that have recently proved  highly effective in the study of structural and algorithmic properties of hereditary graph classes, and are considered key notions for generalizing to hereditary graph classes the theory of sparsity developed for monotone graph classes (where an essential notion is that of \emph{nowhere dense} class). The theory of sparsity was initially built on the notion of \emph{shallow minors} and on the idea of excluding different sets of minors, depending on the depth at which these minors can appear.

	In this paper, we follow a similar path, where \emph{shallow vertex minors} replace shallow minors.  In this setting, we provide a neat characterization of stable / dependent hereditary classes of graphs: A hereditary class of graphs $\mathscr C$ is
	\begin{itemize}
		\item 	\emph{dependent} if and only if it does not contain all permutation graphs and, for each integer $r$,  it excludes some split interval graph as a depth-$r$ vertex minor; 
		\item  	\emph{stable} if and only if, for each integer $r$,  it excludes some half-graph as a depth-$r$ vertex minor.
	\end{itemize}
	
	A key ingredient in proving these results is the preservation of stability and dependence of a class when taking bounded depth shallow vertex minors. We extend this preservation result to binary structures and get, as a direct consequence, that bounded depth shallow vertex minors of graphs with bounded twin-width have bounded twin-width.
\end{abstract}
\maketitle

\section{Introduction}
Graph minors are central to structural graph theory and have been extensively studied by Robertson and Seymour in their \emph{Graph minor project}. The shallow version of graph minors then allowed to extend some of the properties of proper minor closed classes of graphs to classes with bounded expansion and nowhere dense classes, and constitutes the foundation of the theory of sparsity initiated by Jaroslav Ne\v set\v ril and the third author \cite{Sparsity}. 

Vertex minors have been introduced by Oum \cite{OUM200579} in his foundational paper on rank-width. Recall that 
a graph $H$ is a \emph{vertex-minor} of a graph $G$ if it can be reached from $G$ by the successive application of local complementations  and vertex deletions.
It appears that vertex minors are a natural dense analog to graph minors as witnessed, for instance, by the grid theorem for vertex minors \cite{GEELEN202393}. 

Shallow vertex minors have been introduced in \cite{modulo}, where it is proved that shallow vertex minors of graphs in a class with bounded expansion can be obtained through  first-order transductions. This notion is defined inductively from the one of depth-$1$ vertex minor. A graph $H$ is a \emph{depth-$1$ vertex-minor} of a graph $G$ if it can be reached from $G$ by  local complementation of the vertices of an independent set and vertex deletions.
In this paper, we give further evidence that  shallow vertex minors are relevant to the structural study of hereditary graph classes.

It is known that a class is \emph{nowhere dense} if it excludes, for each integer $r$, some graph (equivalently, some clique) as a depth-$r$ shallow minor and that, for a monotone class of graphs, being nowhere dense is equivalent to the model theoretical notions of stability and dependence \cite{Adler2013}. Note that classes excluding a vertex minor are not, in general, dependent. Indeed, as noticed in \cite{modulo},  if a class is closed under vertex minors, then it is dependent if and only if it has bounded cliquewidth.

Study of hereditary classes of graphs from the perspective of model theory recently gained a lot of attention, and led to numerous fundamental results both in structural and algorithmic graph theory. Particularly, stability and dependence appeared to play a special role (like in classical model theory). For instance, it has been proved that first-order model checking is fixed-parameter tractable on hereditary stable classes of graphs \cite{MCST}, while it is conjectured that this problem is  fixed-parameter tractable on those hereditary classes of graphs that are dependent. On the dependent but unstable regime, it is known that first-order model checking is fixed-parameter tractable on classes with bounded twin-width \cite{twin-width1}.

The case of hereditary classes of ordered graphs (that is, of graphs with a fixed linear order on the vertex set) has been settled.
\begin{ext_theorem}[\cite{Tww_ordered}]
	\label{thm:twwNIP}
 For a hereditary class of ordered graphs $\mathscr C$, the following are equivalent (assuming ${\sf FPT}\neq{\sf AW}[\ast]$):
\begin{enumerate}[(1)]
	\item First-order model checking is fixed-parameter tractable on $\mathscr C$;
	\item $\mathscr C$ has bounded twin-width;
	\item $\mathscr C$ is dependent;
	\item\label{it:tww_excl} $\mathscr C$ contains neither the class of all ordered permutation graphs, nor any of $24$ special classes encoding ordered matchings.
\end{enumerate}
\end{ext_theorem}

In the presence of a linear order on the vertex set, a matching can be viewed as a family of intervals and thus naturally defines an interval graph with two sorts of intervals, disjoint ``short'' intervals for the vertices, and ``long'' intervals for the matching. In Theorem~\ref{thm:twwNIP}, the $24$ classes are constructed from perfect matchings between two subsets $A$ and $B$ of vertices with $\max A<\min B$. This constraint translates to the property that the interval graph associated to the ordered matching is split (See~\Cref{fig:SI}): its vertex set is the union of an independent set (the vertices) and a clique (the matching). From this point of view, condition~\eqref{it:tww_excl} of Theorem~\ref{thm:twwNIP} excludes the ordered analog of permutation graphs and split interval graphs. 

\begin{figure}[ht]
	\begin{center}
		\includegraphics[width=.75\textwidth]{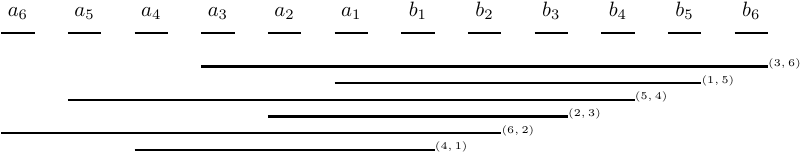}
	\end{center}
	\caption{The split interval graph associated to the matching $M=\{(6,2),(5,4),(4,1),(3,6),(2,3),(1,5)\}$.}
	\label{fig:SI}
\end{figure}

In this paper, we 
provide a full characterization of those hereditary classes of graphs that are  stable or dependent classes in terms of excluded shallow vertex minors.

\begin{theorem}
	\label{thm:NIP_vm}
	Let $\mathscr C$ be a hereditary class of graphs.
	Then, $\mathscr C$ is dependent if and only if the class $\mathscr C$ does not contain all permutation graphs and,
	for every integer $r$, the class $\mathscr C$ excludes some split interval graph as a depth-$r$ shallow vertex minor.
\end{theorem}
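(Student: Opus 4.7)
My plan is to prove the equivalence by handling each direction separately, with the forward (``only if'') direction being a short application of the preservation result promised in the abstract, and the backward direction being the main work.

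For the forward direction, suppose $\mathscr{C}$ is dependent. First, I would note that the class of all permutation graphs is not dependent: permutation graphs encode arbitrary comparabilities between two linearly ordered ground sets, and in particular realize half-graphs of every order, so they even fail stability, and a straightforward VC-dimension argument shows they have the independence property. Similarly, the class of all split interval graphs is not dependent, because the matching ``long/short'' structure in \Cref{fig:SI} encodes an arbitrary bipartite graph, and bipartite graphs of unbounded VC-dimension witness IP. Consequently, a dependent class cannot contain either family in full. For the depth-$r$ condition, I would invoke the preservation result announced in the abstract: for every fixed $r$, the class $\mathscr{C}^{\nabla^{\mathrm v}_r}$ of depth-$r$ shallow vertex minors of members of $\mathscr{C}$ is again dependent; hence it cannot contain the class of all split interval graphs, so some split interval graph is excluded from it as required.

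For the backward direction, I would argue by contrapositive: assume $\mathscr{C}$ is not dependent and deduce that either all permutation graphs appear in $\mathscr{C}$ or some integer $r$ exists for which every split interval graph occurs as a depth-$r$ shallow vertex minor in $\mathscr{C}$. The starting point is unpacking non-dependence: there is a first-order formula $\phi(\bar x;\bar y)$ witnessing IP on $\mathscr{C}$, i.e.\ for each $n$ a graph $G_n\in\mathscr{C}$ with tuples $\bar a_1,\dots,\bar a_n$ and $\bar b_I$ ($I\subseteq[n]$) such that $G_n\models\phi(\bar a_i;\bar b_I)\iff i\in I$. After a standard Ramsey reduction on the types of pairs and triples, I would extract an indiscernible sequence indexed by a large linear order, together with a definable binary pattern that is either ``crossing'' (each parameter selects an order-convex complement) or ``nesting/matching'' (each parameter selects a subset encoded via its boundary).

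The core combinatorial step is then to convert a uniform first-order-definable witness of IP into a \emph{concrete} adjacency pattern obtained by a depth-$r$ shallow vertex minor operation, where the radius $r$ is governed by the quantifier rank of $\phi$ and the arities of $\bar x,\bar y$. The plan is to replace each parameter tuple $\bar b_I$ by a single ``interval'' vertex obtained by locally complementing at a carefully chosen vertex of the tuple (so that its neighborhood becomes the clique side of the prospective split interval graph) and to trim away the rest of the graph by vertex deletions; the independent side corresponds to the $\bar a_i$'s, unchanged. In the crossing case the same Ramsey-extracted pattern is already a permutation graph, so no complementation is needed and we conclude that $\mathscr{C}$ contains all permutation graphs (using heredity to realize each member). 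In the nesting/matching case we obtain, for a bounded depth $r$ depending only on $\phi$, every split interval graph as a depth-$r$ shallow vertex minor of a member of $\mathscr{C}$.

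The hard part will be the backward direction and specifically two points inside it. The first is the Ramsey-type reduction that forces the IP witness into one of the two canonical patterns (crossing versus nesting), essentially a case analysis over the behavior of $\phi$ on an indiscernible sequence; this is where existing machinery on indiscernibles and patterns in NIP-free formulas will be used. The second, and more graph-theoretic, obstacle is controlling the depth at which the required local complementations act: one must show that a bounded number of complementations, each at vertices of bounded eccentricity in the original graph $G_n$, suffice to realize the clique side of an arbitrary split interval graph without accidentally contaminating the independent side, so that the resulting configuration is genuinely a depth-$r$ shallow vertex minor with $r$ depending only on $\phi$ and not on $n$.
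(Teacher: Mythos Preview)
Your forward direction is correct and matches the paper: the classes of all permutation graphs and of all split interval graphs are independent (Example~\ref{ex:si} gives an explicit IP formula for the latter; the paper uses a transduction argument for the former), and since dependence is preserved under bounded-depth shallow vertex minors (Lemma~\ref{lem:svm_dep}), a dependent $\mathscr C$ must exclude some split interval graph at every depth and cannot contain all permutation graphs.

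The backward direction has a genuine gap. Your plan is to extract a ``crossing versus nesting'' dichotomy directly from an IP-witnessing formula via Ramsey on indiscernibles, and then to bound the vertex-minor depth by the quantifier rank of $\phi$. Neither step is workable as stated: local complementation is a purely combinatorial operation with no tie to quantifier rank, the parameter tuples $\bar b_I$ have no a priori relation to any independent set on which one may complement, and the crossing/nesting dichotomy you invoke is not a standard consequence of passing to an indiscernible sequence. In effect you are proposing to re-derive a deep structural characterization of monadic independence from scratch, and your sketch does not supply the ideas that would make that possible. The paper instead imports that characterization as a black box (Theorem~\ref{thm:DMT}, Dreier--M\"ahlmann--Toru\'nczyk): an independent hereditary class contains, for some $r$, arbitrarily large flipped star, clique, or half-graph $r$-crossings, or arbitrarily large comparability grids. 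The remaining work (assembled as Corollary~\ref{cor:svm2NIP}) is concrete graph theory: comparability grids contain all permutation graphs as induced subgraphs; flipped star or clique $r$-crossings yield \emph{all} graphs as bounded-depth vertex minors (via Lemma~\ref{lem:sub} after the flip is removed); and flipped half-graph $r$-crossings yield all ordered-matching graphs, hence all split interval graphs by Lemma~\ref{lem:om2si}. The engine behind these reductions is Lemma~\ref{lem:svm_flip}, which shows that a $k$-flip can be undone, away from the closed neighbourhood of a set of class representatives, by at most $3k/2$ local complementations --- this lemma, together with the decision to stand on Theorem~\ref{thm:DMT} rather than reconstruct it, is precisely what your proposal is missing.
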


This characterization (proved in Section~\ref{sec:char} as \cref{thm:xNIP_vm}) is based on the following graphs:
\begin{itemize}
	\item a \emph{permutation graph} is a graph with vertex set $[n]$ associated to a permutation $\sigma\in\mathfrak S_n$, where $ij$ is an edge if $(i,j)$ is an inversion of $\sigma$ (i.e. if $(i<j)\leftrightarrow (\sigma(i)>\sigma(j))$);
		\begin{center}
		\parbox[c]{.2\textwidth}{\includegraphics[scale=.6]{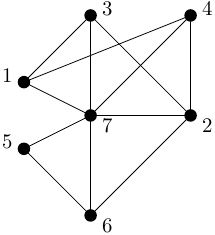}}$\qquad(\sigma=(7\ 3\ 4\ 1\ 6\ 2\ 5))$
	\end{center}
	\item a \emph{split interval} graph is a graph that is both \emph{split} (meaning that is vertex set can be partitioned into an independent set and a clique) and an \emph{interval graph} (i.e. an intersection graph of intervals of the line). See \cite{foldes1977split} for more on split interval graphs.
	\begin{center}
	\includegraphics[scale=.5]{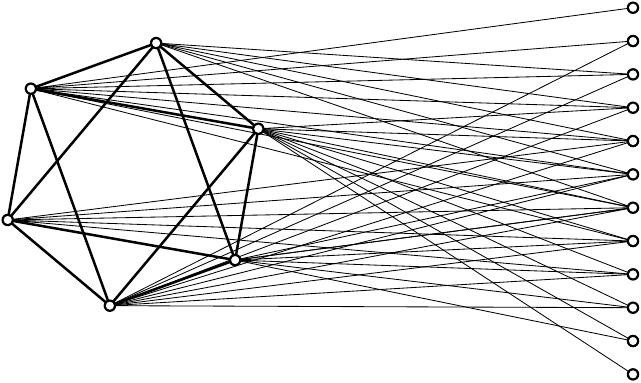}
\end{center}	
\end{itemize}

It is known \cite{RW_SODA} that stable hereditary classes of graphs are exactly dependent hereditary classes of graphs that do not contain the class of all half-graphs. This is a key ingredient in proving  the next characterization.

\begin{theorem}
	\label{thm:stable_vm}
	Let $\mathscr C$ be a hereditary class of graphs.
	Then, $\mathscr C$ is stable if and only if, for every integer $r$, the class $\mathscr C$ excludes some half-graph as a depth-$r$ shallow vertex minor.
\end{theorem}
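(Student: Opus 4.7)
The plan is to deduce both directions from Theorem~\ref{thm:NIP_vm}, the classical fact from \cite{RW_SODA} that a hereditary class is stable iff it is dependent and does not contain all half-graphs, and the paper's key preservation result that stability is preserved under taking bounded-depth shallow vertex minors.

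For the forward implication, I would assume $\mathscr C$ is stable and argue by contradiction. If for some $r$ every half-graph were a depth-$r$ shallow vertex minor of $\mathscr C$, then the class of depth-$r$ shallow vertex minors of $\mathscr C$ would contain all half-graphs and hence be unstable (the edge relation would have the order property), contradicting the preservation theorem.

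For the converse, assume that for every $r$, $\mathscr C$ excludes some half-graph as a depth-$r$ shallow vertex minor. Specialising to $r=0$ (depth-$0$ shallow vertex minors being induced subgraphs) already gives that $\mathscr C$ does not contain all half-graphs, so by \cite{RW_SODA} it remains to prove that $\mathscr C$ is dependent via Theorem~\ref{thm:NIP_vm}. The first condition there, that $\mathscr C$ does not contain all permutation graphs, is immediate: half-graphs are permutation graphs, so containing all permutation graphs would contradict the $r=0$ hypothesis. The second condition, excluding a split interval graph at each depth, is where a short transference is needed. For each $n$, introduce the split interval graph $S_n$ on vertex set $\{a_1,\ldots,a_n\}\cup\{b_1,\ldots,b_n\}$ with the $a_i$'s independent, the $b_j$'s forming a clique, and $a_i\sim b_j$ iff $i\le j$; the representation $a_i\mapsto\{i\}$, $b_j\mapsto[1,j]$ confirms $S_n$ is a split interval graph. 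A single local complementation at the independent set $\{a_1\}$, whose neighbourhood in $S_n$ equals $\{b_1,\ldots,b_n\}$, turns the clique on the $b_j$'s into an independent set and produces exactly $H_n$; hence $H_n$ is a depth-$1$ shallow vertex minor of $S_n$. Therefore, if every $S_n$ were a depth-$r$ shallow vertex minor of $\mathscr C$ for some fixed $r$, every $H_n$ would be a depth-$(r+1)$ shallow vertex minor of $\mathscr C$, contradicting the hypothesis applied at depth $r+1$. Theorem~\ref{thm:NIP_vm} then yields dependence, and \cite{RW_SODA} concludes stability.

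The only substantive step, and the expected main obstacle, is the preservation of stability under bounded-depth shallow vertex minors; however this is announced as the paper's main new ingredient and is already used in the proof of Theorem~\ref{thm:NIP_vm}, so I would invoke it rather than reprove it here.
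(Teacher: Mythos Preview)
Your proposal is correct, and the forward direction is identical to the paper's. For the converse you take a slightly different, and arguably cleaner, route: the paper argues by contraposition with a case split on whether $\mathscr C$ is dependent---in the dependent-but-unstable case it invokes Theorem~\ref{thm:hg}, a Ramsey argument, and Lemma~\ref{lem:svm_flip} to produce all half-graphs in $\svm_2(\mathscr C)$---whereas you directly verify both hypotheses of Theorem~\ref{thm:NIP_vm} from the assumed exclusion of half-graphs at every depth, and then conclude via the equivalence stable $\Leftrightarrow$ dependent $+$ not all half-graphs from \cite{RW_SODA}. Your explicit $S_n$ is exactly the observation the paper uses (``every half-graph can be obtained as a depth-$1$ vertex minor of a split interval graph''), and the ``half-graphs are permutation graphs'' step is shared as well. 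The upshot is that your organization sidesteps the Ramsey/Lemma~\ref{lem:svm_flip} detour at the cost of quoting Theorem~\ref{thm:NIP_vm} as a black box; both proofs rest on the same preservation lemma (Lemma~\ref{lem:svm_st}) and the same structural facts.
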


This characterization (proved in Section~\ref{sec:char} as \cref{thm:xstable_vm}) is based on the following graphs:
\begin{itemize}
		\item the \emph{half-graph} of order $n$ has vertices $a_1,\dots,a_n,b_1,\dots,b_n$ and edges $a_ib_j$ where $i\leq j$;

\begin{center}
	\includegraphics[scale=.6]{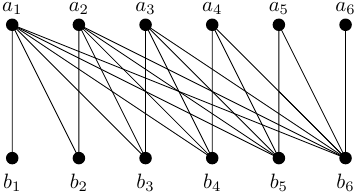}
\end{center}
 \end{itemize}

The paper is organized as follows: In Section~\ref{sec:flip} we give some preliminaries on flips, local complementation, and shallow vertex minors. In Section~\ref{sec:commut} we analyze in what sense flip and local complementation commute.  After recalling the properties of dependence and stability in Section~\ref{sec:MT}, we prove that these properties are preserved by taking shallow vertex minors, and deduce in Section~\ref{sec:char} our characterization theorems. In Section~\ref{sec:bin}, we discuss generalization of the results of Section~\ref{sec:MT} to binary relational structures.

\section{Flips, local complementations, and shallow vertex minors}
\label{sec:flip}
\subsection{Flips}
In this paper, it will be convenient to consider that the adjacency relation $E_G$ of a graph $G$ has value in $\GF2$, with $E_G(u,v)=1$ if $u$ and $v$ are adjacent. This will allow us to use the standard arithmetic operations of $\GF2$.

A \emph{$k$-flip} on a set  $V$ is a pair $F=(\iota,\tau)$, where $\iota:V\rightarrow [k]$ and $\tau:[k]\times [k]\rightarrow \GF2$ is a symmetric function (i.e. $\tau(i,j)=\tau(j,i)$ for all $i,j\in [k]$). Given a flip $F$ on $V(G)$, the graph $G\oplus F$ has the same vertex set as $G$, and its adjacency relation is defined by
\begin{equation}
	\label{eq:flip}
	E_{G\oplus F}(x,y)=E_G(x,y)+\tau(\iota(x),\iota(y)).
\end{equation}

Each flip $F$ on $V(G)$ induces a partition of $V(G)$ into $k$ (possibly empty) parts
$\iota^{-1}(1), \dots, \iota^{-1}(k)$, also called \emph{$F$-classes}. More generally, for a subset $X$ of $V(G)$, the flips $F$ induces a partition of $X$ into parts $\iota^{-1}(1)\cap X, \dots, \iota^{-1}(k)\cap X$, the \emph{$F$-classes of $X$}.

If $G$ is a graph and $F=(\iota,\tau)$ is a $k$-flip on a set $X\supseteq V(G)$, then $F$ naturally defines a $k$-flip on $V(G)$ by considering the restriction of $\iota$ to $V(G)$.
By abuse of notation, we still denote $G\oplus F$ the graph obtained by applying the restricted $k$-flip.


A subset $X$ of vertices of $G$ is \emph{$F$-homogeneous} if the adjacency of any two distinct vertices $u,v\in X$ depends only on $\iota(u)$ and $\iota(v)$. 
A flip $F'$ on $V(G)$ is \emph{$X$-compatible} with $F$ if the (non-empty) traces on $X$ of the parts of $F'$ are the same as the (non-empty) traces on $X$ of the parts of $F$.
It is immediate that $X$ is $F$-homogeneous if and only if there exists a flip $F'$ on $V(G)$ that is $X$-compatible with $F$ and such 
that $X$ is an independent set of $G\oplus F\oplus F'$.

\subsection{Local complementations}
Let $G$ be a graph and let $v$ be a vertex of $G$, the graph obtained from $G$ by \emph{local complementation} of $v$ is the graph $G\ast v$  with same vertex set as $G$, with adjacency relation defined by
\begin{equation}
	\label{eq:lca}
	E_{G\ast v}(x,y)=E_G(x,y)+E_G(x,v)\cdot E_G(v,y).
\end{equation}

If $u$ and $v$ are adjacent,  \emph{pivoting} the edge $uv$ is the operation obtained by successive local complementation of $u,v$, and $u$: $G\wedge uv=G\ast u\ast v\ast u$ ($=G\ast v\ast u\ast v$). This operation flips the edges between the private neighbors of $u$, the private neighbors of $v$, and the common neighbors of $u$ and $v$, and then exchanges $u$ and $v$.
Thus, if $x,y$ are distinct vertices different from $u$ and $v$, it is easily checked that
\begin{equation}
	E_{G\wedge uv}(x,y)=E_G(x,y)+E_G(x,u)\cdot E_G(y,v)+E_G(x,v)\cdot E_G(y,u).
\end{equation}

If $I=\{v_1,\dots,v_k\}$ is an independent set, then
$G\ast v_1\ast\dots\ast v_k=G\ast v_{\sigma(1)}\ast\dots\ast v_{\sigma(n)}$ for every permutation $\sigma$ of $[k]$. This justifies the notation $G\ast I$ for the graph obtained from $G$ by successive local complementation of all the vertices in the independent set $I$.
Note that we will consider the notation $G\ast X$ faulty if $X$ is not independent in $G$. It immediately follows from \eqref{eq:lca} that 
if $I$ is an indepedent set of a graph $G$,  the adjacency relation of $G\ast I$ is
\begin{equation}
	\label{eq:lcI}
	E_{G\ast I}(x,y)=E_G(x,y)+\sum_{v\in I}E_G(x,v)\cdot E_G(v,y).
\end{equation}

A \emph{depth $1$ vertex minor} of a graph $G$ is an induced subgraph of $G\ast I$, for some independent set $I$ of $G$. We denote by $\svm_1(G)$ the set of all the depth $1$ vertex minors of $G$ and, more generally, by $\svm_1(\mathscr C)$ the set of all the depth $1$ vertex minors of graphs in $\mathscr C$. 
We further define inductively \emph{depth $c$ vertex minors} (for positive integers $c>1$)  as depth $1$ vertex minors of depth $(c-1)$ vertex minors. Also, it will be convenient to consider that  depth $0$ vertex minors are simply induced subgraphs. We consequently extend the notations $\svm_1(G)$ and $\svm_1(\mathscr C)$ to depth $c$ vertex minors, as $\svm_c(G)$ and $\svm_c(\mathscr C)$.

For a graph $G$ and a vertex $v$ (resp. a subset $D$ of vertices of $G$), we denote by $G-v$ (resp. $G-D$) the induced subgraph of $G$ obtained by deleting $v$ (resp. $D$). Thus, a depth-$1$ vertex minor of $G$ has the form $G\ast I-D$, for some (possibly empty) independent set $I$ and some (possibly empty) subset $D$ of vertices. It is immediate that if $u$ and $v$ are (distinct) vertices of $G$, then $(G-u)\ast v=(G\ast v)-u$. Thus, for $c\geq 1$, depth-$c$ shallow vertex minors of $G$ are graphs of the form 
$G\ast I_1\ast\dots\ast I_c-D$, where $I_i$ is an independent set of 
$G\ast I_1\ast\dots\ast I_{i-1}$ and $D$ is a subset of vertices. 

\section{Commuting flips and vertex minors}
\label{sec:commut}
Flips and local complementations do not commute in general. However, we shall see that the composition of a flip by a shallow vertex minor operation can be rewritten as the composition of a shallow vertex minor operation and a flip (and vice-versa).

We start our study by a special case, considering the graph $G\oplus F\ast I$, where $I$ is an independent set of both $G$ and $G\oplus F$.

\begin{lemma}
	\label{lem:commute0}
	Let $I$ be an independent set of a graph $G$ and let $F=(\iota,\tau)$ be a $k$-flip on $V(G)$. 
	If $I$ is independent in $G\oplus F$, then there exists a $k2^k$-flip $F'$ on $V(G)$ such that 
	\[
	G\ast I\oplus F'=G\oplus F\ast I. 
	\]
\end{lemma}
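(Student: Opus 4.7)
The plan is to compute the adjacency relations of $G\oplus F\ast I$ and of $G\ast I$ directly from the definitions \eqref{eq:flip} and \eqref{eq:lcI}, and then read off the $k2^k$-flip $F'=(\iota',\tau')$ from their difference. Substituting $E_{G\oplus F}(u,w)=E_G(u,w)+\tau(\iota(u),\iota(w))$ into the local-complementation formula \eqref{eq:lcI} applied to $G\oplus F$, and subtracting the same formula applied to $G$, the pure term $\sum_{v\in I}E_G(x,v)E_G(v,y)$ cancels, leaving, for all distinct $x,y\in V(G)$,
\[
E_{G\oplus F\ast I}(x,y)-E_{G\ast I}(x,y)=\tau(\iota(x),\iota(y))+\sum_{v\in I}E_G(x,v)\tau(\iota(v),\iota(y))+\sum_{v\in I}\tau(\iota(x),\iota(v))E_G(v,y)+\sum_{v\in I}\tau(\iota(x),\iota(v))\tau(\iota(v),\iota(y)).
\]

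The decisive step is to reorganize each sum $\sum_{v\in I}$ by grouping $v$ according to its $F$-class $\iota(v)\in[k]$. The first cross term becomes
\[
\sum_{v\in I}E_G(x,v)\tau(\iota(v),\iota(y))=\sum_{i=1}^{k}\tau(i,\iota(y))\,a_i(x),\qquad a_i(x):=\sum_{v\in I\cap\iota^{-1}(i)}E_G(x,v)\bmod 2,
\]
and thus depends on $x$ only through the vector $(a_1(x),\dots,a_k(x))\in\GF{2}^k$. The symmetric cross term reduces the same way with the roles of $x$ and $y$ swapped, and the pure-$\tau$ cross term collapses to $\sum_i \tau(\iota(x),i)\tau(i,\iota(y))|I\cap\iota^{-1}(i)|\bmod 2$, which depends only on $(\iota(x),\iota(y))$. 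Hence the whole difference depends only on the refined labels
\[
\iota'(x):=(\iota(x),a_1(x),\dots,a_k(x))\in[k]\times\GF{2}^k,
\]
and $\iota'$ maps into a set of size $k2^k$.

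I then set $\tau'(\iota'(x),\iota'(y))$ equal to the expression above (extended arbitrarily on labels that are not attained), so that $E_{G\ast I\oplus F'}(x,y)=E_{G\ast I}(x,y)+\tau'(\iota'(x),\iota'(y))=E_{G\oplus F\ast I}(x,y)$ by construction. Symmetry of $\tau'$ is inherited from that of $\tau$: the $\tau(\iota(x),\iota(y))$ and the pure-$\tau$ summands are visibly symmetric, while the two $E_G$-$\tau$ cross summands are exchanged by the swap $x\leftrightarrow y$. The hypothesis that $I$ is independent in $G\oplus F$ is used only to ensure that $G\oplus F\ast I$ is defined; the main obstacle is purely bookkeeping, namely keeping track of which argument of each $\tau$-factor depends on the running vertex $v$, so that the class-by-class regrouping actually isolates the parities $a_i$ and produces a symmetric $\tau'$.
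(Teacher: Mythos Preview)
Your proof is correct and follows essentially the same approach as the paper's: expand $E_{G\oplus F\ast I}$ using \eqref{eq:flip} and \eqref{eq:lcI}, subtract $E_{G\ast I}$, and observe that the difference depends on each vertex only through its $F$-class together with a vector in $\GF{2}^k$. The only cosmetic difference is the choice of second coordinate for $\iota'$: the paper records the function $i\mapsto\varsigma(x,i):=\sum_{v\in I}E_G(x,v)\,\tau(i,\iota(v))$, whereas you record the parity vector $(a_1(x),\dots,a_k(x))$ with $a_i(x)=\sum_{v\in I\cap\iota^{-1}(i)}E_G(x,v)$; since $\varsigma(x,i)=\sum_j\tau(i,j)\,a_j(x)$, your labeling is a (possibly finer) refinement of the paper's, but both land in a set of size $k2^k$ and yield the same $\tau'$ up to relabeling.
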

\begin{proof}
	We have
	\[
	E_{G\ast I}(u,v)=E_G(u,v)+\sum_{z\in I} E_G(u,z)\cdot E_G(z,v).
	\]
	
	As $I$ is an independent set in $G\oplus F$, 
	\begin{align*}
		E_{G\oplus F\ast I}(u,v)&=E_{G\oplus F}(u,v)+\sum_{z\in I}\bigl(E_{G\oplus F}(u,z)\cdot E_{G\oplus F}(z,v) \bigr)\\
		&=E_G(u,v)+\tau(\iota(u),\iota(v))+\sum_{z\in I}\bigl((E_{G}(u,z)+\tau(\iota(u),\iota(z)))\cdot (E_{G}(v,z)+\tau(\iota(v),\iota(z))) \bigr).
	\end{align*}
	For $i,j\in [k]$ and $u\in V(G)$ we define 
	\begin{align}
	\zeta(i,j)&=\sum_{z\in I} \tau(i,\iota(z))\cdot\tau(j,\iota(z))\\
\varsigma(u,i)&=\sum_{z\in I}E_G(u,z)\cdot \tau(i,\iota(z)). 	\end{align}
	Then, 
	\begin{align*}
		E_{G\oplus F\ast I}(u,v)&= E_{G\ast I}(u,v)+\tau(\iota(u),\iota(v))+\zeta(\iota(u),\iota(v))+\varsigma(u,\iota(v))+\varsigma(v,\iota(u)).
	\end{align*}
	Define $\tau':([k]\times\{0,1\}^k)\times([k]\times\{0,1\}^k)\rightarrow\GF2$ 
	and $\iota':V(G)\rightarrow [k]\times \{0,1\}^{[k]}$ (where $\{0,1\}^{[k]}$ is interpreted as the set of all functions from $[k]$ to $\{0,1\}$) by
	\begin{align*}
		\tau'((i,\alpha),(j,\beta))&=\tau(i,j)+\zeta(i,j)+\alpha(j)+\beta(i)\\
		\iota'(v)&=(\iota(v),\varsigma(v,\,\cdot\,)).
	\end{align*}
	Then $(\tau',\iota')$ defines a $k2^k$-flip $F'$ on $V(G)$ such that 
$E_{G\oplus F\ast I}(u,v)=E_{G\ast I}(u,v)+\tau'(\iota'(u),\iota'(v))$.
\end{proof}

In order to extend this lemma to the  case where $I$ is not assumed to be independent in $G\oplus F$, we start by 
giving a (technical) variant of Lemma~\ref{lem:commute0}. 

\begin{lemma}
	\label{lem:commute0b}
		Let $G$ be a graph, let $F$ be a $k$-flip on $V(G)$,  let $I\subseteq V(G)$ be $F$-homogeneous, in $G$ (hence in $G\oplus F$)  and let 
		$J\subseteq I$ be  non-empty subset of an $F$-class of $J$ that is independent in both $G$ and $G\oplus F$.
		Then, there exists a $k2^k$-flip $F'$ that is $J$-compatible with $F$, such that 
		$G\ast J\oplus F'=G\oplus F\ast J$ and $I\setminus J$ is $F'$-homogeneous in $G\ast J$.
\end{lemma}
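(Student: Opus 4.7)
The plan is to apply \cref{lem:commute0} directly to the set $J$, which is independent in both $G$ and $G\oplus F$ by hypothesis, yielding a $k2^k$-flip $F'=(\iota',\tau')$ on $V(G)$ with $G\ast J\oplus F'=G\oplus F\ast J$. Recall that the construction of \cref{lem:commute0} sets $\iota'(v)=(\iota(v),\varsigma(v,\cdot))$, so $\iota'$ is automatically a refinement of $\iota$. All that remains is to verify two further properties: (a) that $F'$ is $J$-compatible with $F$, and (b) that $I\setminus J$ is $F'$-homogeneous in $G\ast J$.

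For (a), let $i_0\in[k]$ be such that $J\subseteq\iota^{-1}(i_0)$; the trace of $F$ on $J$ is then the trivial partition $\{J\}$. For every $z\in J$, the second coordinate of $\iota'(z)$ is $\varsigma(z,i)=\tau(i,i_0)\sum_{w\in J}E_G(z,w)$, which vanishes identically in $i$ because $J$ is independent in $G$. Hence $\iota'$ is constant on $J$ with value $(i_0,0)$, so the trace of $F'$ on $J$ is also $\{J\}$, giving compatibility.

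For (b), since $\iota'$ refines $\iota$, it suffices to establish the stronger statement that $I\setminus J$ is already $F$-homogeneous in $G\ast J$, i.e.\ that $E_{G\ast J}(u,v)$ depends only on $(\iota(u),\iota(v))$ for distinct $u,v\in I\setminus J$. Expanding via~\eqref{eq:lcI} gives $E_G(u,v)+\sum_{z\in J}E_G(u,z)E_G(z,v)$. The first term depends only on $(\iota(u),\iota(v))$ by $F$-homogeneity of $I$ in $G$; and for each $z\in J\subseteq I$, the pairs $\{u,z\}$ and $\{z,v\}$ lie in $I$ with $\iota(z)=i_0$, so $E_G(u,z)$ and $E_G(z,v)$ are also determined by $\iota$-values. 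The sum thus depends only on $\iota(u)$, $\iota(v)$, and the parity of $|J|$, yielding the required homogeneity.

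The main ``obstacle'' is not really an obstacle: no new construction is needed beyond that of \cref{lem:commute0}, only a verification that two strengthened hypotheses of the present lemma force the desired extra properties. Concretely, the assumption that $J$ lies in a single $F$-class makes $\varsigma$ vanish on $J$ (and hence yields trace compatibility), while the assumption $J\subseteq I$ with $I$ homogeneous is exactly what propagates homogeneity across the local complementation.
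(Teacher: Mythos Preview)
Your proof is correct and follows essentially the same route as the paper: apply \cref{lem:commute0} to obtain $F'$, then use the explicit form $\iota'(v)=(\iota(v),\varsigma(v,\cdot))$ together with the $F$-homogeneity of $I$ to check the two additional properties. One small remark: the paper's version of step~(a) in fact computes $\varsigma(v,\cdot)$ for every $v\in I$ (not just $v\in J$) and concludes the stronger statement that $F'$ is $I$-compatible with $F$, which is what is actually invoked in the proof of \cref{lem:commute}; your computation in~(b) already contains the ingredients for this, since it shows $E_G(v,z)$ for $v\in I$, $z\in J$ depends only on $\iota(v)$.
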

\begin{proof}
	As  $J$ is independent in both $G$ and $G\oplus F$ there exists, according to Lemma~\ref{lem:commute0}, a $k2^k$-flip $F'=(\iota',\tau')$ such that 
	\[G\ast J\oplus F'=G\oplus F\ast J.\]

	As $J$ is in a subset of an $F$-class of $I$, we have $J\subseteq\iota^{-1}(i_0)\cap I$ for some  $i_0\in [k]$. 
	As $I$ is $F$-homogeneous, there exists a symmetric function $f:[k]\times [k]\rightarrow\GF2$ such that, for $x,y\in I$ we have $E_G(x,y)=f(\iota(x),\iota(y))$.
	Thus, if $v\in J$ and $i\in [k]$, we have
\begin{align*}
	\varsigma(v,i)&=\sum_{z\in J}E_G(v,z)\cdot \tau(i,\iota(z))\\
	&=\sum_{z\in J} f(\iota(v),i_0)\cdot\tau(i,i_0)\\
	&=|J|\cdot f(\iota(v),i_0)\cdot\tau(i,i_0),
\end{align*}
where $|J|$ is meant as the cardinal of $J$ counted in $\GF2$, that is the parity of $|J|$.
It follows that $\varsigma(v,\,\cdot\,)$ is a function of $\iota(v)$, thus $F$ and $F'$ are $I$-compatible.	

Let $u,v$ be distinct vertices in $I\setminus J$. 
As $I$ is $F$-homogeneous in $G$ we have
\begin{align*}
	E_{G\ast J}(u,v)&=E_G(u,v)+\sum_{z\in J}E_G(u,z)\cdot E_G(z,v)\\
	&=f(\iota(u),\iota(v))+|J|\cdot f(\iota(u),i_0)\cdot f(\iota(v),i_0).
\end{align*}
Thus, $I\setminus J$ is $F'$-homogeneous in $G\ast J$. 
\end{proof}

\begin{lemma}
	\label{lem:commute}
	There exists a function $F(k)$ (basically, a tower of height $k$) such that for every 
	graph $G$, every independent set $I$ of $G$, and every $k$-flip $F=(\iota,\tau)$  on $V(G)$ there exists a partition $J_1,\dots,J_p$ of $I$ into at most $2k$ parts and an $F(k)$-flip $F'$ on $V(G)$ such that 
	\[
	G\ast I\oplus F'=G\oplus F\ast J_1\ast\dots\ast J_p. 
	\]
\end{lemma}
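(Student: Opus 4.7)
The plan is to prove the lemma by induction on $m$, the number of non-empty $F$-classes of $I$ (so $m\le k$). In each inductive round I will process a single $F$-class $I_j := I\cap\iota^{-1}(j)$, spending at most two parts of the partition of $I$ on it, for a total of $p\le 2m\le 2k$ parts at the end. Each round will cost one application of Lemma~\ref{lem:commute0b}, which inflates the flip by $k\mapsto k2^k$, so the final flip has at most $F(k)$ classes for $F$ a tower of height $O(k)$. The base case $m=0$ is trivial ($p=0$, $F'=F$); in the inductive step I split on $\tau(j,j)$.

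Handling a single $F$-class $I_j$: if $\tau(j,j)=0$, then $I_j$ is independent in $G\oplus F$ as well as in $G$, so Lemma~\ref{lem:commute0b} applies directly with $J=I_j$ and produces a flip $F^{(1)}$ with $G\ast I_j\oplus F^{(1)}=G\oplus F\ast I_j$ and with $I\setminus I_j$ being $F^{(1)}$-homogeneous in $G\ast I_j$. This uses one part. If $\tau(j,j)=1$, then $I_j$ is a clique in $G\oplus F$; I pick $v_j\in I_j$ and apply Lemma~\ref{lem:commute0b} to the singleton $J=\{v_j\}$ (trivially independent in both $G$ and $G\oplus F$). A direct $\GF{2}$-computation then shows that $I_j\setminus\{v_j\}$ is independent in both $G\ast v_j$ (using independence of $I$ in $G$) and in $(G\ast v_j)\oplus F^{(1)}=G\oplus F\ast v_j$, since locally complementing the clique at $v_j$ gives back an independent set ($1+1\cdot 1 = 0$). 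Moreover, $I_j\setminus\{v_j\}$ lies in a single $F^{(1)}$-class, because in the explicit construction of Lemma~\ref{lem:commute0} the extra coordinate $\varsigma(v,\cdot)=\sum_{z\in\{v_j\}}E_G(v,z)\tau(\cdot,\iota(z))$ vanishes for every $v\in I\setminus\{v_j\}$ (as $E_G(v,v_j)=0$ by independence of $I$). A second application of Lemma~\ref{lem:commute0b} with $J=I_j\setminus\{v_j\}$ then yields $F^{(2)}$ with $G\ast I_j\oplus F^{(2)}=G\oplus F\ast v_j\ast (I_j\setminus\{v_j\})$, using a second part.

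Applying induction: I invoke the inductive hypothesis on the triple $(G\ast I_j,\ F^{(r)},\ I\setminus I_j)$, where $r\in\{1,2\}$. The crucial bookkeeping fact is that $I\setminus I_j$ has exactly $m-1$ non-empty $F^{(r)}$-classes, with original $F$-classes $I_{j'}$ ($j'\neq j$) each staying inside a single $F^{(r)}$-class. This follows again from the vanishing of $\varsigma$: for any $v\in I\setminus I_j$ and any $z$ in the complemented set $J$, one has $E_G(v,z)=0$, so $\varsigma(v,\cdot)=0$ and hence $\iota^{(r)}(v)$ depends only on $\iota(v)$ on $I\setminus I_j$. Independence of $I\setminus I_j$ in $G\ast I_j$ is another direct $\GF{2}$-calculation using the same vanishing of cross products, and $F^{(r)}$-homogeneity of $I\setminus I_j$ in $G\ast I_j$ is exactly the closing statement of Lemma~\ref{lem:commute0b}. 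The inductive hypothesis then gives a partition of $I\setminus I_j$ into $\le 2(m-1)$ parts and a final flip; combined with the one or two parts for $I_j$, this yields a partition of $I$ into $\le 2m\le 2k$ parts, with a final flip of size bounded by iterating $k\mapsto k2^k$ at most $2k$ times.

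The main obstacle is exactly this structural preservation step: one must make sure that after one round of local complementation plus flip composition, the remaining set $I\setminus I_j$ still satisfies the hypotheses needed to re-invoke Lemma~\ref{lem:commute0b} (independence in both the current graph and its flip, $F^{(r)}$-homogeneity, and keeping each untouched $F$-class confined to a single new flip-class, so that the count $2m$ and not something worse like $2\cdot k2^k$ is achieved). Everything rests on the single observation that, since $J\subseteq I$ and $I$ is independent in $G$, the correction vector $\varsigma(v,\cdot)$ from Lemma~\ref{lem:commute0}'s proof vanishes identically on $I\setminus J$; once this is granted, the induction proceeds cleanly.
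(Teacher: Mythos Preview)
Your proposal is correct and follows essentially the same route as the paper's proof: induction on the number of $F$-classes of $I$, with a case split on whether the current class $I_j$ is independent or a clique in $G\oplus F$, handling the clique case by peeling off a single vertex first (costing two parts instead of one), and invoking Lemma~\ref{lem:commute0b} at each step. The only stylistic difference is that you justify the bookkeeping (that the remaining $F$-classes of $I$ are not split by the new flip) via the direct observation $\varsigma(v,\cdot)=0$ for $v\in I\setminus J$, whereas the paper packages this as ``$I$-compatibility'' inside Lemma~\ref{lem:commute0b}; your argument is in fact a clean specialization, since $I$ being independent in $G$ makes the $F$-homogeneity calculation trivial.
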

\begin{proof}
	First, note that  $I$ is $F$-homogeneous in $G\oplus F$ as it is independent in $G$. 
	We prove the lemma by induction on the number $k$ of $F$-classes of $I$. 
	We allow $I$ to be empty, so that the base case, $k=0$, obviously holds.
	
	Assume that the lemma when the number of $F$-classes of $I$ is at most $k\geq 0$, and 
	let $I$ be an independent set with $(k+1)$ $F$-classes, denoted $I_1,\dots,I_{k+1}$. 
		
\begin{itemize}
   	\item   Assume $I_1$ is independent in $G\oplus F$.
   	
    Then, there exists, according to Lemma~\ref{lem:commute0b} a $k2^k$-flip $F'$ that is $I$-compatible with $F$ such that 
   	\[
   	G\ast I_1\oplus F'=G\oplus F\ast I_1\]
   	 and $I\setminus I_1$ is $F'$-homogeneous in $G\ast I_1$. 
   	 As $F'$ is $I$-compatible with $F$, $I_2,\dots I_{k+1}$ are the $k$ $F'$-classes of $I\setminus I_1$ in $G\ast I_1$. 
   	 By induction, there exist $J_1,\dots,J_p$ and a flip $F''$ such that
   	 \[
   	 (G\ast I_1)\ast (I\setminus I_1)\oplus F''=(G\ast I_1)\oplus F'\ast J_1\ast\dots\ast J_p.
   	 \]
	Hence,
	\begin{align*}
		G\ast I\oplus F''&=(G\ast I_1)\ast (I\setminus I_1)\oplus F''\\
		&=(G\ast I_1)\oplus F'\ast J_1\ast\dots\ast J_p\\
		&=G\oplus F\ast I_1\ast J_1\ast\dots\ast J_p,
	\end{align*}
	which has the desired form, as $\{I_1,J_1,\dots,J_p\}$ is a partition of $I$ with $p+1\leq 2(k+1)$ parts.
\item Otherwise,  $I_1$ is not independent in $G\oplus F$.

Then, $I_1$ is a clique of size at least two of $G\oplus F$. We split $I_1$ into a singleton $\{a_1\}$ and $I_1\setminus\{a_1\}$.
According to the above, there exists a $k2^k$-flip $F'$ that is $I$-compatible with $F$ such that 
\[G\ast a_1\oplus F'=G\oplus F\ast a_1\]
 and $I\setminus\{a_1\}$ is $F'$-homogeneous in $G\ast a_1$. 
As $F'$ is $I$-compatible with $F$, $I_1\setminus\{a_1\}, I_2,\dots I_{k+1}$ are the $F'$-classes of $I\setminus I_1$. As $I_1$ is a clique in $G\oplus F$, $I_1\setminus\{a_1\}$ is independent in $G\oplus F\ast a_1=G\ast a_1\oplus F'$. Hence, according to Lemma~\ref{lem:commute0b}, there exists a $(k 2^k)2^{k2^k}$-flip $F''$ that is $I\setminus\{a_1\}$-compatible with $F'$ such that 
\[(G\ast a_1)\ast (I_1\setminus\{a_1\}) \oplus F''
=(G\ast a_1)\oplus F'\ast (I_1\setminus \{a_1\}).\]
 Moreover, as $F''$ is $I\setminus\{a_1\}$-compatible with $F'$, $I_2,\dots I_p$ are the $k$ $F''$-classes of $I\setminus I_1$ in $G\ast I_1$.

By induction, there exist $J_1,\dots,J_p$ and a flip $F'''$ such that
\[
(G\ast I_1)\ast (I\setminus I_1)\oplus F'''=(G\ast I_1)\oplus F''\ast J_1\ast\dots\ast J_p.
\]
Hence,
\begin{align*}
	G\ast I\oplus F'''&=(G\ast I_1)\ast (I\setminus I_1)\oplus F'''\\
	&=(G\ast I_1)\oplus F''\ast J_1\ast\dots\ast J_p\\
	&=(G\ast a_1)\ast (I_1\setminus\{a_1\})\oplus F''\ast I_1\ast J_1\ast\dots\ast J_p\\
	&=(G\ast a_1)\oplus F'\ast (I_1\setminus\{a_1\})\ast J_1\ast\dots\ast J_p\\
	&=G\oplus F\ast a_1\ast (I_1\setminus\{a_1\})\ast J_1\ast\dots\ast J_p,\\
\end{align*}
which has the desired form, as $\{\{a_1\}, (I_1\setminus\{a_1\}), J_1,\dots,J_p\}$ is a partition of $I$ with $p+2\leq 2(k+1)$ parts.
\end{itemize}
\end{proof}
\begin{corollary}
	Let $F=(\iota,\tau)$ be a $k$-flip of $G$ and let  $I$ be an independent set of  $G\oplus F$. 
	Then, there exists a partition $I_1,\dots,I_p$ of $I$ into at most $2k$ parts and an $F(k)$-flip $F'$ on $V(G)$ such that 
	\[
	G\ast I_1\ast\dots\ast I_p\oplus F'=G\oplus F \ast I. 
	\]
\end{corollary}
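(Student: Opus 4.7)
The plan is to deduce this from \cref{lem:commute} by exploiting the fact that flips are involutive: applying the same $k$-flip $F=(\iota,\tau)$ twice to a graph leaves it unchanged, since by \eqref{eq:flip} we have $E_{(G\oplus F)\oplus F}(x,y)=E_G(x,y)+2\tau(\iota(x),\iota(y))=E_G(x,y)$ in $\GF{2}$. So first I would set $H:=G\oplus F$ and observe that, under this change of viewpoint, the hypotheses of \cref{lem:commute} are satisfied: $F$ is a $k$-flip on $V(H)=V(G)$, and $I$ is an independent set of $H$.

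Next, I would apply \cref{lem:commute} to the graph $H$, the flip $F$, and the independent set $I$. This yields a partition $I_1,\dots,I_p$ of $I$ into at most $2k$ parts and an $F(k)$-flip $F'$ on $V(H)=V(G)$ such that
\[
H\ast I\oplus F' \;=\; H\oplus F\ast I_1\ast\dots\ast I_p.
\]
Unpacking $H=G\oplus F$ and using involutivity to write $H\oplus F=(G\oplus F)\oplus F=G$, the right-hand side becomes $G\ast I_1\ast\dots\ast I_p$, and the equation reads
\[
(G\oplus F)\ast I\oplus F' \;=\; G\ast I_1\ast\dots\ast I_p.
\]
Applying $F'$ once more to both sides (again using involutivity of flip application) gives the desired identity
\[
G\oplus F\ast I \;=\; G\ast I_1\ast\dots\ast I_p\oplus F',
\]
with $p\leq 2k$ parts and $F'$ an $F(k)$-flip, as required.

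There is essentially no obstacle here: the only thing to verify is the involutivity of flips, which is a one-line $\GF{2}$ computation from \eqref{eq:flip}, and the fact that the sequence of local complementations $\ast I_1\ast\dots\ast I_p$ is well-defined in $G$ (i.e., each $I_i$ is independent at the appropriate stage). The latter follows from the proof of \cref{lem:commute} applied to $H$, which produces the $I_i$'s precisely as independent sets along the successive local complementations starting from $G=H\oplus F$. Hence the corollary is a direct symmetric counterpart of \cref{lem:commute}, obtained by swapping the roles of $G$ and $G\oplus F$.
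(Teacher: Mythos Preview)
Your proof is correct and follows essentially the same approach as the paper: apply \cref{lem:commute} with $G\oplus F$ in place of $G$, then use the involutivity of flips (both to simplify $(G\oplus F)\oplus F=G$ and to move $F'$ to the other side). Your additional remark that the $I_i$'s are automatically independent at the appropriate stages is a helpful clarification, but the argument is otherwise identical to the paper's.
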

\begin{proof}
	Applying Lemma~\ref{lem:commute} to $G\oplus F$, we obtain a  partition $I_1,\dots,I_p$ of $I$ into at most $2k$ parts and an $F(k)$-flip $F'$ on $V(G)$ such that 
	\[
	(G\oplus F)\ast I\oplus F'=(G\oplus F)\oplus F\ast I_1\ast\dots\ast I_p. 
	\]
	As flips are involutive, we deduce
	$G\ast I_1\ast\dots\ast I_p\oplus F'=G\oplus F\ast I$.
\end{proof}
\begin{corollary}
	Let $F=(\iota,\tau)$ be a $k$-flip of $G$ and let  $I$ be an independent set of  $G$. 
	Then, there exists a partition $I_1,\dots,I_p$ of $I$ into at most $2k$ parts and an $F(k)$-flip $F'$ on $V(G)$ such that 
	\[
	G\oplus F'\ast I_p\ast\dots\ast I_1=G\ast I\oplus F. 
	\]
\end{corollary}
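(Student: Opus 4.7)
The plan is to adapt the strategy of the preceding corollary, replacing the involution $(G\oplus F)\oplus F=G$ on the flip side by the involution $(G\ast I)\ast I=G$ on the local-complementation side. This rests on two facts that follow directly from~\eqref{eq:lcI}: first, $I$ remains independent in $G\ast I$ (each summand $E_G(u,z)\cdot E_G(z,v)$ with $u,v,z\in I$ vanishes because $I$ is independent in $G$); second, $(G\ast I)\ast I=G$ (using that $E_{G\ast I}(x,v)=E_G(x,v)$ for all $x$ and $v\in I$, the two correction sums coincide and cancel in characteristic~$2$).

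With these facts, I apply Lemma~\ref{lem:commute} to the triple $(G\ast I,\,I,\,F)$, which is admissible by the first fact. This yields a partition $I_1,\dots,I_p$ of $I$ into at most $2k$ parts and an $F(k)$-flip $F'$ such that
\[(G\ast I)\ast I\oplus F'=(G\ast I)\oplus F\ast I_1\ast\dots\ast I_p.\]
Using the second fact, the left-hand side collapses to $G\oplus F'$, giving the intermediate identity $G\oplus F'=G\ast I\oplus F\ast I_1\ast\dots\ast I_p$.

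The last and most delicate step, which I expect to be the main obstacle, is to invert the chain of local complementations on the right by applying $\ast I_p\ast I_{p-1}\ast\dots\ast I_1$ to both sides, checking at each stage that the set to be complemented is still independent. On the right, this produces the palindrome $G\ast I\oplus F\ast I_1\ast\dots\ast I_p\ast I_p\ast\dots\ast I_1$, which collapses telescopically: by the construction in Lemma~\ref{lem:commute} every $I_j$ is independent in $G\ast I\oplus F\ast I_1\ast\dots\ast I_{j-1}$, and the calculation of the first fact shows that this independence survives the operation $\ast I_j$, so each pair $\ast I_j\ast I_j$ acts as the identity and the whole tail cancels down to $G\ast I\oplus F$. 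On the left, the intermediate identity guarantees that after $i$ applications the current graph equals $G\ast I\oplus F\ast I_1\ast\dots\ast I_{p-i}$, in which $I_{p-i}$ is independent as required for the next step. The resulting equality $G\oplus F'\ast I_p\ast\dots\ast I_1=G\ast I\oplus F$ is the desired conclusion.
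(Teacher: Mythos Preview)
Your proof is correct and follows the same approach as the paper: apply Lemma~\ref{lem:commute} to $G\ast I$ (using that $I$ stays independent there), collapse the left side via the involution $(G\ast I)\ast I=G$, and then peel off $I_p,\dots,I_1$ from both sides. In fact you are more careful than the paper, which simply invokes ``local complementations are involutive'' for the last step without spelling out the independence checks you justify.
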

\begin{proof}
	Note that $I$ is an independent set of $G\ast I$ as well.
	Applying Lemma~\ref{lem:commute} to $G\ast I$, we obtain a  partition $I_1,\dots,I_p$ of $I$ into at most $2k$ parts and an $F(k)$-flip $F'$ on $V(G)$ such that 
	\[
	(G\ast I)\ast I\oplus F'=(G\ast I)\oplus F\ast I_1\ast\dots\ast I_p. 
	\]
	As local complementations are involutive, we deduce
	$G\oplus F'\ast I_p\ast \dots\ast I_1=G\ast I\oplus F$.
\end{proof}

\begin{lemma}
	\label{lem:clean}
	Let $I$ be an independent set of a graph $G$ and let $F=(\iota,\tau)$ be a $k$-flip on $V(G)$. 
	Then, there exists a $2k$-flip $F'$ on $V(G)$ such that
	$G\oplus F'$ is obtained from $G\oplus F$ by removing all the edges between vertices of $I$.
\end{lemma}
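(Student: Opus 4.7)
The plan is to build $F'$ by refining the class function $\iota$ so that it also records membership in $I$, and then modifying $\tau$ only on pairs of classes that are entirely inside $I$.

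More precisely, I would set $\iota':V(G)\to [k]\times\{0,1\}$ by $\iota'(v)=(\iota(v),\mathbf{1}_{v\in I})$, which yields at most $2k$ non-empty classes. I then define the symmetric function $\tau':([k]\times\{0,1\})\times([k]\times\{0,1\})\to\GF2$ by
\[
\tau'((i,a),(j,b))=\begin{cases} 0 & \text{if } a=b=1,\\ \tau(i,j) & \text{otherwise.}\end{cases}
\]
This is well-defined and symmetric, so $F'=(\iota',\tau')$ is a $2k$-flip on $V(G)$.

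It remains to verify the adjacency identity by cases on whether $x,y$ belong to $I$. If both $x,y\in I$, then $\iota'(x)=(\iota(x),1)$ and $\iota'(y)=(\iota(y),1)$, so $\tau'(\iota'(x),\iota'(y))=0$; moreover $E_G(x,y)=0$ since $I$ is independent, hence $E_{G\oplus F'}(x,y)=0$, matching the edge-removed version of $G\oplus F$. In all other cases at least one of the second coordinates is $0$, so $\tau'(\iota'(x),\iota'(y))=\tau(\iota(x),\iota(y))$, and therefore
\[
E_{G\oplus F'}(x,y)=E_G(x,y)+\tau(\iota(x),\iota(y))=E_{G\oplus F}(x,y),
\]
which is what we want for a pair not entirely in $I$.

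There is no real obstacle here: independence of $I$ in $G$ is exactly what lets us zero out $\tau'$ on the $(I,I)$-block without disturbing any other adjacencies, and refining $\iota$ to detect membership in $I$ keeps the modification local to that block. Hence $G\oplus F'$ coincides with $G\oplus F$ outside the edges inside $I$ and has no edges inside $I$, as required.
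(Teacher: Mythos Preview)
Your proof is correct and essentially identical to the paper's own argument: the paper also defines $\iota'(v)=(\iota(v),\mathbf 1_{v\in I})$ and sets $\tau'$ to $0$ on the $(1,1)$-block and to $\tau(i,j)$ elsewhere, then verifies the two cases in the same way.
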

\begin{proof}
	Define $\iota':V(G)\rightarrow [k]\times\{0,1\}$ by $\iota'(v)=(\iota(v),1)$ if $v\in I$ and $\iota'(v)=(\iota(v),0)$, otherwise.
	Define $\tau'$ by $\tau'((i,a),(j,b))=\tau(i,j)$ if $(a,b)\neq (1,1)$ and $\tau'((i,a),(j,b))=0$, otherwise. Let $F'=(\tau',\iota')$.
	Then, if $u$ and $v$ do not both belong to $I$, we have
	$E_{G\oplus F'}(u,v)=E_{G\oplus F}(u,v)$. However, if both $u$ and $v$ belong to $I$, we have $E_{G\oplus F'}(u,v)=E_G(u,v)=0$.
\end{proof}

\begin{lemma}
	\label{lem:local}
	Let $H\in\svm_1(G)$ and let $x,y\in V(H)$. 
	Then $\dist_H(x,y)\geq \frac12\,\dist_G(x,y)$.
\end{lemma}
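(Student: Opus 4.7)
The plan is to show directly that every edge of $H$ corresponds to a walk of length at most $2$ in $G$, so that a shortest path in $H$ lifts to a walk of twice its length in $G$.

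Since $H\in\svm_1(G)$, we may write $H=G\ast I-D$ for some independent set $I$ of $G$ and some set $D\subseteq V(G)$ disjoint from $\{x,y\}$. Let $k=\dist_H(x,y)$ and fix a shortest $x$--$y$ path $x=v_0,v_1,\dots,v_k=y$ in $H$. I would first verify the following local claim: for each $i\in\{0,\dots,k-1\}$, $\dist_G(v_i,v_{i+1})\leq 2$.

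To see this, I would apply the adjacency formula \eqref{eq:lcI}, namely
\[
E_{G\ast I}(v_i,v_{i+1})=E_G(v_i,v_{i+1})+\sum_{z\in I}E_G(v_i,z)\cdot E_G(z,v_{i+1}),
\]
evaluated in $\GF2$. Since $v_iv_{i+1}$ is an edge of $H=G\ast I-D$, the left-hand side equals $1$. There are then two possibilities: either $E_G(v_i,v_{i+1})=1$, in which case $v_iv_{i+1}$ is already an edge of $G$ and so $\dist_G(v_i,v_{i+1})\leq 1$; or $E_G(v_i,v_{i+1})=0$ and the sum is $1$, which forces the existence of at least one $z\in I$ with $E_G(v_i,z)=E_G(z,v_{i+1})=1$, giving a path $v_i,z,v_{i+1}$ of length $2$ in $G$. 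In both cases $\dist_G(v_i,v_{i+1})\leq 2$, as claimed.

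Concatenating these walks yields a walk from $x$ to $y$ in $G$ of length at most $2k$, hence $\dist_G(x,y)\leq 2\,\dist_H(x,y)$, which is the desired bound. There is no real obstacle here: the only subtlety is to recall that the intermediate vertices $z\in I$ used in case~$2$ live in $V(G)$ (and may or may not belong to $D$), which is fine because distances in $G$ are measured over the entire vertex set of $G$.
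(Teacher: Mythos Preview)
Your proof is correct and follows essentially the same approach as the paper: show that every edge of $H$ arises from a path of length at most $2$ in $G$, then sum along a shortest $H$-path. The paper's version is terser (it does the one-edge case in words rather than invoking \eqref{eq:lcI} explicitly), but the argument is identical; your added remark that the intermediate $z\in I$ may lie in $D$ without harm is a nice clarification.
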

\begin{proof}
	Let $H=G\ast I- D$.
	Assume $\dist_H(x,y)=1$. Then, either $\dist_G(x,y)=1$ or some edge between $x$ and
	$y$ has been created by the local complementation of the vertices in $I$. In this
	later case, $x$ and $y$ have a common neighbor in $I$ hence $\dist_G(x,y)=2$.
	Generally, considering a shortest path between $x$ and $y$ in $H$, we deduce $\dist_H(x,y)\geq \frac12\,\dist_G(x,y)$.
\end{proof}

As a direct consequence of Lemmas~\ref{lem:commute0}, \ref{lem:clean} and~\ref{lem:local} we have
\begin{lemma}
	\label{lem:spread}
	Let $F$ be a $k$-flip on $V(G)$ and let  $I$ be an independent set of  $G$.  Then, there exists a $2k 2^{2k}$-flip $F'$ on 
	$V(G)$ such that, for every two vertices $x,y$ in $V(G)$ we have
\begin{equation}
	\dist_{G\ast I\oplus F'}(x,y)\geq \frac12\dist_{G\oplus F}(x,y).
\end{equation}
\end{lemma}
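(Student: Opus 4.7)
The plan is to reduce to the two-sided independence hypothesis of Lemma~\ref{lem:commute0} by first ``cleaning'' the edges $F$ creates inside $I$, and then transport the distance contraction provided by Lemma~\ref{lem:local} through the commutation identity so obtained.

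First, I would apply Lemma~\ref{lem:clean} to $(G,F,I)$ to obtain a $2k$-flip $F_1$ on $V(G)$ such that $G\oplus F_1$ is exactly $G\oplus F$ with all edges within $I$ removed. By construction, $I$ is independent in $G\oplus F_1$, and $I$ is independent in $G$ by hypothesis, so the two-sided independence required by Lemma~\ref{lem:commute0} holds. Furthermore, $G\oplus F_1$ is a spanning subgraph of $G\oplus F$, so removing edges can only increase distances; hence for every $x,y \in V(G)$,
\[
\dist_{G\oplus F_1}(x,y)\;\geq\;\dist_{G\oplus F}(x,y).
\]

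Next, I would invoke Lemma~\ref{lem:commute0} with $G$ and the $2k$-flip $F_1$. This yields a $(2k)\cdot 2^{2k}$-flip $F'$ on $V(G)$ such that
\[
G\ast I\oplus F'\;=\;G\oplus F_1\ast I.
\]
The right-hand side is a depth-$1$ shallow vertex minor of $G\oplus F_1$ (take $D=\emptyset$), so Lemma~\ref{lem:local} gives $\dist_{G\oplus F_1\ast I}(x,y)\geq \tfrac12\,\dist_{G\oplus F_1}(x,y)$. Chaining with the subgraph inequality above and substituting via the commutation identity yields
\[
\dist_{G\ast I\oplus F'}(x,y)\;\geq\;\tfrac12\,\dist_{G\oplus F_1}(x,y)\;\geq\;\tfrac12\,\dist_{G\oplus F}(x,y),
\]
which is the required bound. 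The flip $F'$ has $(2k)\cdot 2^{2k}=2k\cdot 2^{2k}$ classes, as stated.

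There is no real obstacle here: the three previous lemmas are exactly the ingredients needed, and the only point deserving a word is the monotonicity of distances under edge deletion, which makes the clean-up step by Lemma~\ref{lem:clean} compatible with the distance comparison against $G\oplus F$ (rather than against the cleaned graph $G\oplus F_1$). The flip-size accounting is immediate from the $k\mapsto k\cdot 2^k$ blow-up of Lemma~\ref{lem:commute0} applied with parameter $2k$ coming from Lemma~\ref{lem:clean}.
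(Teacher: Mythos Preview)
Your proof is correct and follows essentially the same approach as the paper: apply Lemma~\ref{lem:clean} to make $I$ independent in the flipped graph (noting that edge removal only increases distances), then Lemma~\ref{lem:commute0} to commute, then Lemma~\ref{lem:local} for the distance bound. The chain of inequalities and the flip-size bookkeeping match the paper's proof exactly.
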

\begin{proof}
	According to Lemma~\ref{lem:clean} there exists a $2k$-flip $\widehat F$ such that $\dist_{G\oplus \widehat F}\geq \dist_{G\oplus F}$ and $I$ is independent in $G\oplus \widehat F$. By Lemma~\ref{lem:commute0} there exists a $(2k) 2^{2k}$-flip $F'$ such that $G\ast I\oplus F'=G\oplus \widehat F\ast I$. According to Lemma~\ref{lem:local}, for every $x,y\in V(G)$, we have
\[
\dist_{G\ast I\oplus F'}(x,y)=\dist_{G\oplus \widehat F\ast I}(x,y)\geq \frac12\dist_{G\oplus \widehat F}(x,y) \geq \frac12\dist_{G\oplus F}(x,y).
\]
	
\end{proof}

\section{Dependence and Stability}
\label{sec:MT}
The notions of dependence (or NIP) and stability are central to the classification theory in Model Theory.
Let $\mathscr C$ be a class of graphs and let  $\varphi(\bar x;\bar y)$ be a first-order partitioned formula, that is a first-order formula whose free variables are partitioned into two tuples (here $\bar x$ and $\bar y$).

The formula $\varphi$ is \emph{unstable} on $\mathscr C$ if, for every integer $n$ there exists a graph $G\in\mathscr C$ and tuples $\bar a_1,\dots,\bar a_n,\bar b_1,\dots,\bar b_n$ of vertices of $G$ such that $G\models \varphi(\bar a_i,\bar b_j)$ if and only if $i\leq j$. A class $\mathscr C$ is \emph{stable} if no formula is unstable on $\mathscr C$.

The formula $\varphi$ is \emph{independent} on $\mathscr C$ if, for every integer $n$ there exists a graph $G\in\mathscr C$ and tuples   $\bar a_1,\dots,\bar a_n,\bar b_\emptyset,\dots,\bar b_{[n]}$ of vertices of $G$ such that $G\models \varphi(\bar a_i,\bar b_J)$ if and only if $i\in J$. A class $\mathscr C$ is \emph{dependent} (or {\sf NIP}) if no formula is independent on $\mathscr C$.

We illustrate this notion by an example:
\begin{example}
	\label{ex:si}
	The class of split interval graphs is independent.
\end{example}
\begin{proof}
	Consider the following formulas.
\begin{align*}
	\nu(x)&:= \exists y\ \bigl(\neg(x=y)\wedge\forall z\ (E(x,z)\leftrightarrow E(y,z)\bigr)\\
	\eta(x)&:=\neg\nu(x)\wedge\forall y\ (\nu(y)\rightarrow\neg E(x,y))\\
	\mu_\nu(x,y)&:=E(x,y)\wedge\forall z\ \bigl(((\nu(z)\wedge E(z,y))\rightarrow (\forall t\ E(x,t)\rightarrow E(z,t))\bigr)\\
	\mu_\eta(x,y)&:=E(x,y)\wedge\forall z\ \bigl(((\eta(z)\wedge E(z,y))\rightarrow (\forall t\ E(x,t)\rightarrow E(z,t))\bigr)\\
	\varphi(x,y)&:=	\exists z\ (\mu_\nu(x,z)\wedge\mu_\eta(y,z))
\end{align*}
These formulas will be used on the following construction of a split interval graph $G$ (See~\Cref{fig:power}). Let $n>1$ be an integer. We consider $2n+2^n$ non-intersecting intervals $a_1,a_1',\dots,a_n,a_n',b_{[n]}$, $\dots,b_{\{1\}},b_\emptyset$ in this order (the subsets of $[n]$ being ordered in reverse lexicographic order). We add an interval from $a_1$ to $a_n'$ then, for each $i\in [n]$ and each $J\subseteq [n]$ containing $i$, we add an interval from $a_i$ to $b_J$.
The formula $\nu(x)$ expresses that $x$ has a false twin. Hence, $\nu(G)=\{a_1,\dots,a_n,a_1',\dots,a_n'\}$. The formula $\eta$ allows defining the remaining of the stable part of $G$: $\eta(G)=\{b_\emptyset,\dots,b_{\{1,\dots,n\}}\}$. The formula $\mu_\nu(x,y)$ expresses that $x$ is the leftmost interval in $\nu(G)$ that is adjacent to $y$, while $\mu_\eta(x,y)$ expresses that $x$ is the rightmost interval in $\eta(G)$ that is adjacent to $y$. Finally, we easily check that $G\models\phi(a_i,b_J)$ if and only if $i\in J$. Hence, $\phi$ is independent on the class of split interval graphs.
\end{proof}

\begin{figure}[ht]
	\centering
	\includegraphics[width=.75\textwidth]{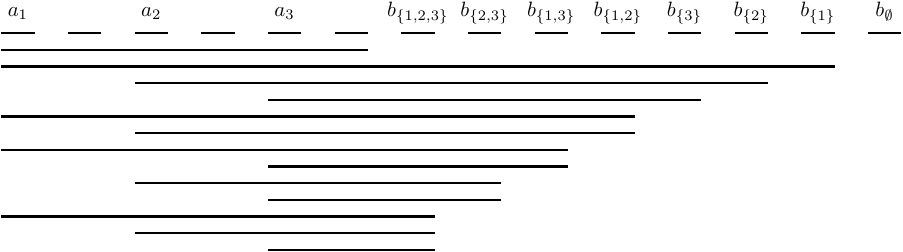}
	\caption{Encoding a power graph in a split interval graph.
		We have $\varphi(a_i,b_J)\leftrightarrow i\in J$.}
	\label{fig:power}
\end{figure}

We can also consider monadic expansions of graphs, that is, relational structures with a single binary relation and finitely many 
unary predicates, which can be seen as vertex-colored graphs.
A class $\mathscr C'$ is a \emph{monadic expansion} of a class of graphs $\mathscr C$ is the class $\mathscr C$ is obtained from $\mathscr C'$ by ``forgetting'' the unary predicates.
A class $\mathscr C$ is \emph{monadically stable} (resp. \emph{monadically dependent})
if every monadic expansion of $\mathscr C$ is stable (resp. dependent). For hereditary classes of graphs, the monadic and non-monadic version collapse:

\begin{ext_theorem}[{\cite{braunfeld2022existential}}]
	\label{thm:collapse}
	Let $\mathscr C$  be a hereditary class of graphs.
	Then $\mathscr C$ is dependent if and only if it is monadically dependent, and it is stable if and only if it is monadically stable.
\end{ext_theorem}

As a direct consequence, we have
\begin{corollary}
	Let $\mathscr C$ be a class of graphs.
	Then $\mathscr C$ is monadically dependent if and only if  the hereditary closure of $\mathscr C$ is dependent, and it is monadically stable if and only if the hereditary closure of $\mathscr C$ is stable.
\end{corollary}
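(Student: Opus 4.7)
The plan is to deduce the corollary directly from Theorem~\ref{thm:collapse} via the standard model-theoretic trick of encoding induced-subgraph inclusion by a unary predicate plus relativization. Write $\mathscr{C}^h$ for the hereditary closure of $\mathscr{C}$; the two halves (dependence and stability) are formally identical, so I focus on dependence.

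For the easy direction, suppose $\mathscr{C}^h$ is dependent. By Theorem~\ref{thm:collapse} it is monadically dependent. Any monadic expansion $\mathscr{C}^+$ of $\mathscr{C}$ can be completed to a monadic expansion of $\mathscr{C}^h$ by extending the chosen unary predicates arbitrarily (e.g.\ by $\emptyset$) on the extra graphs. Since dependence passes to subclasses, $\mathscr{C}^+$ is dependent, hence $\mathscr{C}$ is monadically dependent.

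The main direction is the converse. Assume $\mathscr{C}$ is monadically dependent, and suppose for contradiction that some formula $\varphi(\bar x;\bar y)$ is independent on $\mathscr{C}^h$, witnessed for each $n$ by tuples $\bar a_1,\dots,\bar a_n,(\bar b_J)_{J\subseteq [n]}$ in some $G_n \in \mathscr{C}^h$. Each $G_n$ is an induced subgraph of some $H_n \in \mathscr{C}$; set $U_n := V(G_n) \subseteq V(H_n)$ and expand $\mathscr{C}$ monadically by a single unary predicate $U$ interpreted as $U_n$ on $H_n$ and arbitrarily (say $\emptyset$) elsewhere. Let $\varphi^U$ denote the relativization of $\varphi$ to $U$, obtained by replacing each $\exists z\,\psi$ with $\exists z\,(U(z)\wedge\psi)$ and each $\forall z\,\psi$ with $\forall z\,(U(z)\to\psi)$. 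Because $G_n$ is \emph{induced} in $H_n$ and the parameters all lie in $U_n$, a straightforward induction on $\varphi$ yields $(H_n,U_n) \models \varphi^U(\bar a_i,\bar b_J)$ iff $G_n \models \varphi(\bar a_i,\bar b_J)$, which is iff $i \in J$. So $\varphi^U$ witnesses independence on the monadic expansion of $\mathscr{C}$, contradicting monadic dependence.

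The stability case is entirely parallel, with the half-graph order property replacing the independence property. No serious obstacle arises: the only point worth checking is that a single unary predicate suffices to cut out $G_n$ as an induced subgraph of $H_n$, and that relativization therefore faithfully transports the witness pattern from $\mathscr{C}^h$ into a monadic expansion of $\mathscr{C}$.
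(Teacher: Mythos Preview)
Your argument is correct and is exactly the standard relativization argument that the paper implicitly relies on when it writes ``As a direct consequence, we have'' and gives no explicit proof. There is nothing to add: the paper treats the corollary as immediate from Theorem~\ref{thm:collapse} together with the well-known fact (which you spell out) that monadic dependence/stability is invariant under passing to the hereditary closure.
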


Recently, several characterization theorems have been given for stable and dependent hereditary classes. We shall make use of two types of characterizations.

The first type of characterizations is based on the possibility to push vertices far away by means of a flip.

\begin{ext_theorem}[\cite{dreier2023indiscernibles}]
	\label{thm:flat}
	A class $\mathscr C$ is monadically stable if and only if for every radius $r$,
	there exists an integer $k$ and an unbounded non-decreasing function  $U:\mathbb
	N\rightarrow \mathbb N$ such that for every $G\in\mathscr C$ and every $A\subseteq
	V(G)$ there exists a $k$-flip $F$ and a subset $S\subseteq A$ with $|S|\geq U(|A|)$
	and any two vertices in $S$ are pairwise at distance at least $r$ in $G\oplus F$.
\end{ext_theorem}

\begin{ext_theorem}[{\cite[Theorem 1.3]{dreier2024flipbreakability}}]
	\label{thm:break}
	A class $\mathscr C$ is monadically dependent if and only if for every radius $r$, there exists an integer $k$ and an unbounded non-decreasing function  $U:\mathbb N\rightarrow \mathbb N$ such that for every $G\in\mathscr C$ and every $A\subseteq V(G)$ there exists a $k$-flip $F$ and subsets $A_1,A_2\subseteq A$ with $|A_1|,|A_2| \geq U(|A|)$ and any vertex in $A_1$ is at distance at least $r$ in $G\oplus F$ to vertices in $A_2$.
\end{ext_theorem}

The second type of characterization is structural.

\begin{ext_theorem}[{\cite[Theorem 1.6]{dreier2024flipbreakability}}]
	\label{thm:DMT}
	Let $\mathscr C$ be a graph class. Then $\mathscr C$ is monadically dependent if and only if for every $r\geq  1$ there exists $n\in\mathbb N$ such that $\mathscr C$ excludes as induced subgraphs
	\begin{itemize}
		\item 	all flipped star $r$-crossings of order $n$, and
		\item all flipped clique $r$-crossings of order $n$, and
		\item all flipped half-graph $r$-crossings of order $n$, and
		\item  the comparability grid of order $n$.
	\end{itemize}
\end{ext_theorem}

We take time to define the families of graphs used in Theorem~\ref{thm:DMT}.~
Let $r$ be a positive integer.
\begin{itemize}
	\item  A \emph{star $r$-crossings of order $n$}  (\Cref{sfig:s}) is the $r$-subdivision of $K_{n,n}$. It consists of principle vertices  $a_1,\dots,a_n$ and $b_1,\dots,b_n$ and internally vertex-disjoint  paths $P_{i,j}$ ($i,j\in [n]$) with vertices $p_{i,j,k}$ (with $0\leq k\leq r+1$ monotone on the path), where $p_{i,j,0}=a_i$ and $p_{i,j,r+1}=b_j$.
\item 
A \emph{clique $r$-crossings of order $n$} (\Cref{sfig:c}) is the graph obtained from a star $r$-crossings of order $n$ by making the neighborhood of each principal vertex complete. 
\item 
A \emph{half-graph $r$-crossings of order $n$} (\Cref{sfig:h}) is the graph obtained from a star $r$-crossings of order $n$ by making adjacent 
the vertices $a_i$ and $p_{i',j,1}$ whenever $i'\geq i$  and the vertices $b_j$ and $p_{i,j',r}$ whenever $j'\geq j$.
\end{itemize}
A \emph{flipped} star $r$-crossings (resp. clique $r$-crossing, half-graph $r$-crossings)
of order $n$ is the graph obtained from a star $r$-crossings
(resp. a clique $r$-crossing, a half-graph $r$-crossings)
 of order $n$ by applying a flip based on the partitions with parts
$\{p_{i,j,k}\colon i,j\in [n]\}$, for $0\leq k\leq r+1$, which we call the flip-parts.
\begin{itemize}
	\item 
Finally, the \emph{comparability grid} of order $n$ (\Cref{sfig:comp}) has vertex set $\{a_{i,j}\colon i,j\in [n]\}$, with 
$a_{i,j}$ adjacent to $a_{i',j'}$ whenever $i=i'$, $j=j'$, or $(i<j)\leftrightarrow (i'<j')$. 
\end{itemize}

\begin{figure}[ht]
\phantom{A}\hfill
\subfloat[The comparability grid\label{sfig:comp}]{\includegraphics[height=3cm]{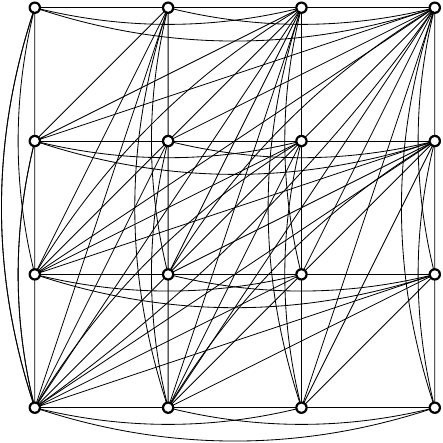}}
\hfill
\subfloat[A (flipped) star $r$-crossing\label{sfig:s}]{\includegraphics[height=3cm]{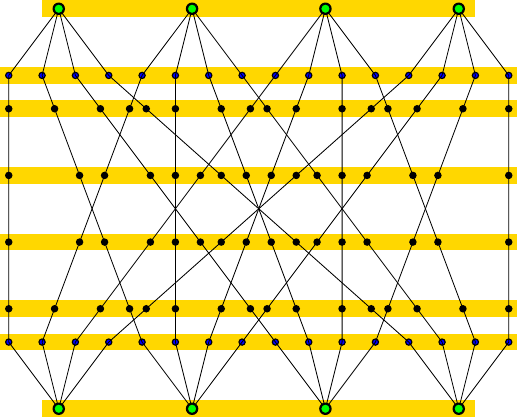}}\hfill\phantom{A}\\
\phantom{A}\hfill
\subfloat[A (flipped) clique $r$-crossing\label{sfig:c}]{\includegraphics[height=3cm]{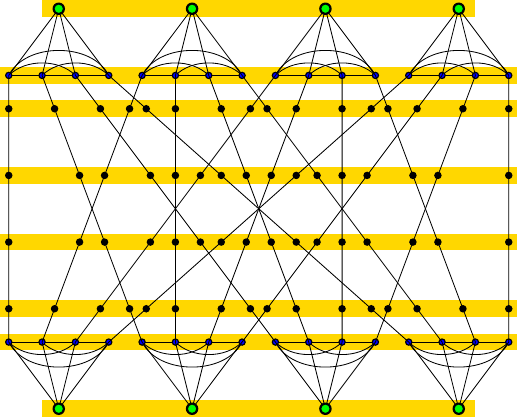}}\hfill
\subfloat[A (flipped) half-graph $r$-crossing\label{sfig:h}]{\includegraphics[height=3cm]{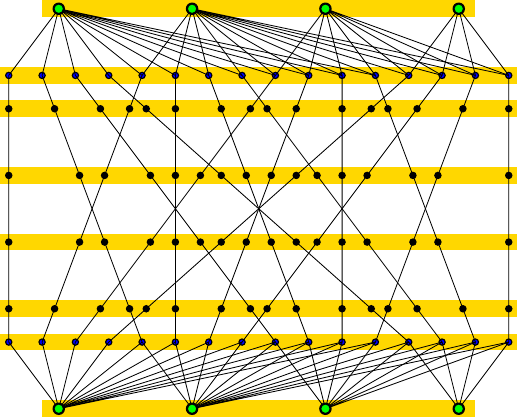}}\hfill\phantom{A}
\caption{The forbidden induced configurations. \Cref{sfig:s,sfig:c,sfig:h}: the flipped versions are obtained by applying an $r+2$-flip whose parts are materialized as horizontal lines of vertices.}
\end{figure}

The next easy lemma will be useful.

\begin{lemma}
	\label{lem:svm_flip}
	Let $G$ be a graph, let $F=(\iota,\tau)$ be a $k$-flip on $V(G)$,
	and let $I$ be an independent set of $G$ containing exactly one element of each $F$-class.
	
	Then, there exist $z_1,\dots,z_p$ in $I$ with $p\leq 3k/2$ (where each element of $I$ is used at most twice), such that
	\begin{equation}
		G\oplus F\ast z_1\ast\dots\ast z_p-N_G[I]=G-N_G[I],
	\end{equation}
	where $N_G[I]$ denotes the closed neighborhood of $I$ in $G$.
\end{lemma}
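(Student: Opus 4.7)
The plan is to prove the lemma by induction on $k$. First I compute the toggle-effect of two basic operations on pairs $(x,y) \in W \times W$ where $W := V(G) \setminus N_G[I]$: a single local complementation $\ast z_c$, and the pivot $\wedge z_i z_j = \ast z_i \ast z_j \ast z_i$ (applicable when $\tau(c_i,c_j) = 1$, so that $z_i z_j$ is an edge of $G \oplus F$). Writing $v_c := (\tau(c,1),\ldots,\tau(c,k))$ and using that $E_G(z_c, x) = 0$ for every $z_c \in I$ and $x \in W$, the adjacency of $z_c$ to $x$ in $G \oplus F$ equals $v_c(\iota(x))$; a direct calculation then gives that $\ast z_c$ toggles $xy$ iff $v_c(\iota(x)) v_c(\iota(y)) = 1$ (symmetric toggle matrix $v_c v_c^\top$), and $\wedge z_i z_j$ toggles $xy$ iff $v_i(\iota(x)) v_j(\iota(y)) + v_j(\iota(x)) v_i(\iota(y)) = 1$ (symmetric toggle matrix $v_i v_j^\top + v_j v_i^\top$).

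For the inductive step I follow the algorithm corresponding to symmetric Gaussian elimination of $\tau$ over $\GF{2}$. If $\tau = 0$, take $p = 0$. If some diagonal entry $\tau(c,c) = 1$, apply $\ast z_c$: the residual matrix $\tau + v_c v_c^\top$ has its $c$-th row and column identically zero (because $v_c(c) v_c = v_c$ when $\tau(c,c) = 1$), and each remaining $z_{c'}$ has updated effective pattern $\tilde v_{c'} := v_{c'} + \tau(c',c) v_c$ satisfying $\tilde v_{c'}(c) = \tau(c',c)(1 + \tau(c,c)) = 0$, so subsequent operations never toggle edges of $W \times W$ involving class $c$; this reduces the task to a $(k-1)$-flip on $[k] \setminus \{c\}$, handled by induction. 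If instead all diagonal entries of $\tau$ vanish but $\tau \neq 0$, pick $i \neq j$ with $\tau(i,j) = 1$ and apply $\wedge z_i z_j$; a similar computation shows $\tau + v_i v_j^\top + v_j v_i^\top$ has its $i$-th and $j$-th rows and columns zero (using $\tau(i,i) = \tau(j,j) = 0$), and each remaining $z_c$ has updated pattern $\tilde v_c := v_c + \tau(c,j) v_i + \tau(c,i) v_j$ with $\tilde v_c(i) = \tilde v_c(j) = 0$, reducing the task to a $(k-2)$-flip on $[k] \setminus \{i,j\}$, again handled by induction.

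For the inductive bookkeeping, after a reduction step one keeps the base graph $G$ and the set $I' := I \setminus \{z_c\}$ (respectively $I \setminus \{z_i, z_j\}$), which remains independent in $G$, together with the new flip given by the residual matrix; a short check shows that the adjacency of $I'$ to $W$ and the edges internal to $I'$ in the current modified graph agree with those in $G$ flipped by the residual, so the operations produced by the inductive hypothesis for this new instance also work when applied in sequence to the current graph. Summing up, if $s$ single-vertex operations and $t$ pivots are used in total, then $s + 2t \leq k$ (each step eliminates one or two classes), so the total number of operations is $p = s + 3t = (s+2t) + t \leq k + k/2 = 3k/2$; moreover each $I$-vertex is used at most twice ($z_c$ once in a single-vertex step; $z_i$ twice and $z_j$ once in a pivot). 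The main obstacle is verifying the two residual identities together with the crucial pattern-update identities $\tilde v_{c'}(c) = 0$ (after $\ast z_c$ with $\tau(c,c)=1$) and $\tilde v_c(i) = \tilde v_c(j) = 0$ (after $\wedge z_i z_j$ with $\tau(i,i)=\tau(j,j)=0$), which ensure that subsequent operations cleanly avoid the already-eliminated classes; these are all short direct calculations in $\GF{2}$.
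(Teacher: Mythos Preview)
Your proposal is correct and follows essentially the same approach as the paper's proof: both proceed by induction, at each step either performing a single local complementation $\ast z_c$ when some diagonal entry $\tau(c,c)=1$, or a pivot $\wedge z_iz_j$ when all diagonal entries vanish but some off-diagonal entry $\tau(i,j)=1$, and both verify that the residual flip has the corresponding row(s) and column(s) zeroed out. The paper inducts on $|X|$ where $X=\{i:\exists j,\ \tau(i,j)=1\}$ rather than on $k$, which makes the inductive bookkeeping slightly cleaner (one keeps the same $G$, $I$, and $\iota$ throughout, only $\tau$ changes), whereas your reduction ``to a $(k-1)$-flip on $[k]\setminus\{c\}$'' is a bit informal since vertices in class $c$ are still present; but your bookkeeping paragraph correctly identifies what needs to be checked. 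Your explicit counting argument $p=s+3t\le (s+2t)+t\le k+k/2$ and your framing of the whole procedure as symmetric Gaussian elimination over $\GF2$ are both nice additions that the paper leaves implicit.
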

\begin{proof}
	Let $X=\{i\in [k]\colon \exists j\in [k], \tau(i,j)=1\}$. We prove a strengthening of the lemma statement by induction on $|X|$, where we require that $\iota(z_i)\in X$ for all $i\in [p]$.
	
	The base case is $X=\emptyset$, in which case $G\oplus F=G$, so we can let $p=0$.
	
	Assume we have proved the induction hypothesis for $|X|\leq\ell$ (where $\ell\geq 0$) and let $|X|=\ell+1$.
	
	Assume there exists $a\in X$ with $\tau(a,a)=1$, and let 
	$z$ be the element in $I$ with $\iota(z)=a$.
	Then, for $u,v\notin N_G[z]$,
	\begin{align*}
		E_{G\oplus F\ast z}(u,v)&=E_G(u,v)+\tau(\iota(u),\iota(v))+
		((E_G(u,z)+\tau(\iota(u),a)\cdot ((E_G(v,z)+\tau(\iota(v),a))\\
		&=E_G(u,v)+\tau(\iota(u),\iota(v))+\tau(\iota(u),a)\cdot \tau(\iota(v),a)\\
		&=E_{G\oplus F'}(u,v),
	\end{align*}
	where $F'=(\iota, \tau')$ and $\tau'(i,j)=\tau(i,j)+\tau(i,a)\cdot \tau(j,a)$. In particular, $\tau'(a,i)=0$ for all $i\in [k]$.
	Thus, the result follows from the induction hypothesis.
	
	Otherwise, let $a\in X$. As $\tau(a,a)=0$, there exists $b\neq a$ in $X$ with $\tau(a,b)=1$. Note that $\tau(b,b)=0$.
	Let $z$ (resp. $z'$) be the element of $I$ with 
	$\iota(z)=a$ (resp. $\iota(z')=b$).
	Then, $zz'$ is an edge of $G\oplus F$, and we have, for $u,v\notin N_G[\{z,z'\}]$,
	
	\begin{align*}
		E_{G\oplus F\ast z\ast z'\ast z}(u,v)&=
		E_{G\oplus F\wedge zz'}(u,v)\\
		&=E_G(u,v)+\tau(\iota(u),\iota(v))+
		(E_G(u,z)+\tau(\iota(u),\iota(z))\cdot 	(E_G(v,z')+\tau(\iota(v),\iota(z'))\\
		&\qquad+(E_G(u,z')+\tau(\iota(u),\iota(z'))\cdot 	(E_G(v,z)+\tau(\iota(v),\iota(z))\\
		&=E_G(u,v)+\tau(\iota(u),\iota(v))+
		\tau(\iota(u),a)\cdot\tau(\iota(v),b)+\tau(\iota(u),b)\cdot\tau(\iota(v),a)\\
		&=E_{G\oplus F'}(u,v),
	\end{align*}
	where $F'=(\iota, \tau')$ and $\tau'(i,j)=\tau(i,j)+\tau(i,a)\cdot \tau(j,b)+\tau(i,b)\cdot\tau(j,a)$. In particular, 
	$\tau'(a,i)=0$ and $\tau'(b,i)=0$ for all $i\in [k]$.
	Thus, the statement follows from the induction hypothesis.
\end{proof}

The next lemma shows how to reduce subdivisions.

\begin{lemma}
	\label{lem:sub}
	Let $G$ be a subdivision of a graph $H$, where every edge is subdivided at most $r$ times.
	Then,  $H$ is a depth-$\lceil \log_2 (r+1)\rceil$  vertex minor of $G$.
\end{lemma}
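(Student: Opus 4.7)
The plan is to proceed by induction on $k := \lceil \log_2(r+1) \rceil$. The base case $k=0$ corresponds to $r=0$, where $G=H$ and the conclusion is trivial since a graph is by convention a depth-$0$ vertex minor of itself.

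For the inductive step, I would realize a simultaneous ``halving'' of all subdivision paths as a single depth-$1$ vertex-minor operation. Writing, for each edge $e=uv$ of $H$, the corresponding subdivision path in $G$ as $u = v_0^e, v_1^e, \ldots, v_{\ell_e}^e = v$ with $\ell_e \le r+1$, I would take
\[
S \;=\; \bigcup_{e \in E(H)} \bigl\{\, v_j^e \;:\; j \text{ odd},\ 1 \le j \le \ell_e - 1 \,\bigr\},
\]
the set of odd-indexed internal subdivision vertices across every path. Two odd-indexed vertices on the same path differ by index at least $2$ and hence are non-adjacent in $G$; vertices on different paths are non-adjacent because the subdivision paths are internally disjoint. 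Thus $S$ is independent in $G$.

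Next I would show that $G' := (G \ast S) - S$ is a subdivision of $H$ in which every edge is subdivided at most $\lceil (r+1)/2 \rceil - 1$ times. Every $v_j^e \in S$ has exactly the two $G$-neighbors $v_{j-1}^e$ and $v_{j+1}^e$, both lying on the same path, so by \eqref{eq:lcI} the local complementation $G \ast S$ toggles precisely the edges $v_{j-1}^e v_{j+1}^e$ for each $e$ and each odd internal index $j$, and each such edge is toggled by exactly one member of $S$. After deleting $S$, the surviving vertices on the path indexed by $e$ are the even-indexed $v_0^e, v_2^e, \ldots$, together with $v_{\ell_e}^e$ when $\ell_e$ is odd, which form a path of length $\lceil \ell_e / 2 \rceil$. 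Since $\lceil \log_2 \lceil (r+1)/2 \rceil \rceil = k - 1$, the induction hypothesis yields $H \in \svm_{k-1}(G')$, and combining with $G' \in \svm_1(G)$ gives $H \in \svm_k(G)$.

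The only delicate point---and the only step I would want to check carefully---is that performing all local complementations at the vertices of $S$ simultaneously produces no spurious edges between distinct subdivision paths. This is immediate from \eqref{eq:lcI}: each term $E_G(x,v) \cdot E_G(v,y)$ with $v \in S$ contributes only when both $x$ and $y$ are $G$-neighbors of $v$, and thus lie on the same subdivision path as $v$. The remainder of the argument is routine bookkeeping on path lengths via the recurrence $\lceil \lceil n/2 \rceil / 2 \rceil = \lceil n/4 \rceil$.
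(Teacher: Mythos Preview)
Your proof is correct and follows essentially the same approach as the paper: pick an independent set of subdivision vertices that hits roughly half of each path, locally complement and delete to obtain a subdivision of $H$ with each edge subdivided at most $\lfloor r/2\rfloor$ times, then induct. The paper phrases the choice as ``an independent set of maximal possible cardinal'' among the subdivision vertices, whereas you pick the concrete instance given by the odd-indexed internal vertices; your explicit verification via \eqref{eq:lcI} that no cross-path edges are created is a welcome detail that the paper leaves implicit.
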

\begin{proof}
	Let $S=V(G)\setminus V(H)$ be the set of all the subdivision vertices of $G$, and let $I\subseteq S$ be an independent set of maximal possible cardinal.	
	Note that if an edge $uv$ of $H$ is subdivided $k\leq r$ times in $G$, then $I$ contains $\lceil k/2\rceil$ of these subdivision vertices, and thus $G\ast I-I$ is a subdivision of $H$, where each edge is subdivided at most $\lfloor r/2\rfloor$ times.
	By induction, $H$ is a depth-$c$ vertex minor of $G\ast I-I$, where
	$2^{c-1}+1\leq \lfloor r/2\rfloor+1\leq 2^{c}$, i.e. 
	if $2^{c}+1\leq r+1\leq  2^{c+1}$. 
	Hence, $H$ is a depth-$\lceil \log_2 (r+1)\rceil$ vertex minor of $G$.
\end{proof}

It will be helpful to consider the following graphs instead of split interval graphs: the \emph{ordered-matching graph} associated to a matching $M\subset [n]\times [n]$ is the graph with vertex set
$\{a_i\colon i\in [n]\}\cup\{b_j\colon j\in [n]\}\cup M$, where $(k,\ell)\in M$ is adjacent to $a_i$ if $i\leq k$ and to $b_j$ if $j\leq \ell$ (See~\Cref{fig:OM}).

\begin{figure}[ht]
\begin{center}
\includegraphics[scale=.75]{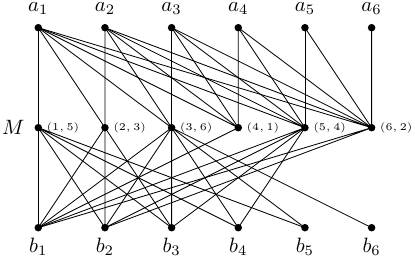}
\end{center}
\caption{Ordered-matching graph associated to\\  $M=\{(1,5), (2,3), (3,6), (4,1), (5,4), (6,2)\}$.}
\label{fig:OM}
\end{figure}

\begin{lemma}
	\label{lem:om2si}
	Every split interval graph is a depth-$1$  vertex minor of an ordered matching graph.
\end{lemma}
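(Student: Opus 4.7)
The plan is to realize any given split interval graph $H$ as $G\ast\{\alpha_1\}-D$ for a suitably chosen ordered matching graph $G$ and deletion set $D$. The guiding observation is that the ordered matching graph associated to a matching $M$ and the split interval graph depicted by the same matching differ only in the adjacencies inside $M$: an independent set on the ordered-matching-graph side and a clique on the split-interval-graph side. Since local complementation at a vertex $v$ whose neighborhood is exactly $M$ flips precisely the edges inside $M$, a single local complementation converts this independent set into a clique while leaving every other edge intact.

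First I would normalize $H$. Fix an interval representation; the intervals representing the clique $C$ pairwise intersect and therefore, by Helly's theorem, share a common point $x$, which can be chosen generically so that no $I$-interval contains $x$. Then the independent set $I$ partitions into $A=\{a_1,\dots,a_p\}$ (left of $x$, with $a_1$ nearest $x$) and $B=\{b_1,\dots,b_q\}$ (right of $x$, with $b_1$ nearest $x$), and for each $c\in C=\{c_1,\dots,c_m\}$ there are cutoffs $k_c\in\{0,\dots,p\}$ and $\ell_c\in\{0,\dots,q\}$ such that $c$ is adjacent to $a_i$ iff $i\le k_c$ and to $b_j$ iff $j\le\ell_c$.

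Next I would inflate the index set. Setting $n=(\max(p,q)+1)\,m$, I define
\[
M=\bigl\{(k_{c_i}\,m+i,\ \ell_{c_i}\,m+i):i\in[m]\bigr\}\subset[n]\times[n].
\]
Pairs with the same $k_{c_i}$ sit in the same length-$m$ ``block'' but differ in the offset $i$, while pairs with different $k_{c_i}$ sit in disjoint blocks; hence all first coordinates are distinct, and similarly all second coordinates are distinct, so $M$ really is a matching. In the resulting ordered matching graph $G$, the vertex $\alpha_1$ (i.e.\ $a_1$ in the notation of the ordered matching graph) is adjacent to every matching vertex (since each first coordinate is at least $1$) and to no other vertex, so in $G\ast\alpha_1$ the matching vertices have been turned from an independent set into a clique and no other adjacency has changed.

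To conclude, I would delete $\alpha_1$ together with every $\alpha_j$ for $j\notin\{km+1:k\in[p]\}$ and every $\beta_j$ for $j\notin\{\ell m+1:\ell\in[q]\}$. The key check is that $\alpha_{km+1}$ is adjacent in $G\ast\alpha_1$ to the matching vertex encoding $c_i$ iff $km+1\le k_{c_i}m+i$, that is $(k-k_{c_i})m\le i-1$; since $1\le i\le m$, this inequality holds precisely when $k\le k_{c_i}$, matching the adjacency of $a_k$ and $c_i$ in $H$. The analogous computation handles the $\beta$-side, and the matching vertices now form a clique in $G\ast\alpha_1$, so the resulting graph is isomorphic to $H$, exhibiting $H$ as a depth-$1$ vertex minor of $G$. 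The only nuance is $I$-vertices universal to $C$, which the generic choice of $x$ avoids; alternatively they can be accommodated as $a_1$ or $b_1$ by permitting $k_c$ or $\ell_c$ to take the maximal value.
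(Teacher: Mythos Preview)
Your proof is correct and follows essentially the same strategy as the paper: build an ordered matching graph in which a single vertex (your $\alpha_1$, the paper's $a_1$) is adjacent to precisely the matching part $M$, so that one local complementation turns $M$ from an independent set into a clique and recovers the split interval graph as an induced subgraph. The only real difference is bookkeeping: the paper extends the interval representation (adding singleton vertices until every clique interval has distinct left and right endpoints among the $I$-vertices), whereas you work combinatorially with the cutoff pairs $(k_c,\ell_c)$ and spread them into distinct coordinates via the block encoding $k_c m+i$. Both constructions solve the same problem---making the cutoff data into an honest matching---and your arithmetic check that $\alpha_{km+1}$ is adjacent to the vertex encoding $c_i$ iff $k\le k_{c_i}$ is exactly the content of the paper's endpoint bookkeeping. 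Your handling of an $I$-vertex whose interval meets $\bigcap C$ (at most one such vertex, since $I$ is independent) by declaring it to be $a_1$ is fine.
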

\begin{proof}
	Let $H$ be a split interval graph. We construct a supergraph $G$ of $H$ with additional properties. We first let $G=H$. As $G$ is a split interval graph, its vertex set is the disjoint union of a  clique $K$ and an independent set $I$. It is known \cite[Proposition~1]{foldes1977split} that $G$ has an interval representation where every interval in $I$ is a singleton. Hence, the interval representation defines a linear order $<$ on $I$.
	As $K$ is a clique, $\bigcap K$ is not empty. Adding, if necessary,  a vertex to $I$, we can assume that there exists $a_1\in I\cap\bigcap K$. Moreover, by slightly extending some intervals and adding some vertices in $I$, we can assume that all the intervals have distinct leftmost incidence in $I$ and that distinct rightmost incidence in $I$. By adding new intervals and vertices in $I$ if necessary, we can further ensure that every vertex $v\in I$ is the leftmost or the rightmost incidence in $I$ of exactly one interval in $K$ (and that all the intervals in $K$ are incident to $a_1$). Then, the elements of $I$ can be labeled 
	$a_m<\dots<a_1<b_1<\dots<b_m$, and the elements in $K$ can be labeled as a set $M$ of pairs $(i,j)$, where the pair associated to $v\in L$ is $(i,j)$ if $a_i$ is the leftmost incidence of $v$ in $I$, $b_j$ the rightmost incidence of $v$ in $I$. Note 
	that $M$ is a perfect matching of $\{a_1,\dots,a_m\}$ and $\{b_1,\dots,b_m\}$ by construction.
	
	The graph $G'$ obtained by flipping $K$ (i.e. turning $K$ into an independent set) is an ordered-matching graph $H'$.
	Now, $H$ is an induced subgraph of $G'=H'\ast a_1$, hence a depth-$1$ vertex minor of an ordered-matching graph.
	\end{proof}

With Lemma~\ref{lem:svm_flip} in hand, we reduce the different cases of Theorem~\ref{thm:DMT} in terms of shallow vertex minors.
\begin{lemma}
	Let $\mathscr C$ be a class of graphs and let $r$ be a positive integer. Assume that for arbitrarily large integer $n$ the class $\mathscr C$ includes a flipped star $r$-crossings or a flipped clique $r$-crossings of order $n$. Then, $\svm_{c}(\mathscr C)$ is the class of all graphs, where $c=3r/2+4+\lceil \log_2 (2r+1)\rceil$.
\end{lemma}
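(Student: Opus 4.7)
The plan is to show that for every finite graph $H$, there exists $G \in \Cc$ such that $H$ is a depth-$c$ shallow vertex minor of $G$. The strategy has three phases: pigeonhole the flip, unflip using Lemma~\ref{lem:svm_flip}, and reduce subdivisions via Lemma~\ref{lem:sub}. First, since the set of $(r+2)$-flips on a labeled partition is finite, a pigeonhole argument lets me fix a single flip $\tau$ together with a choice of ``star'' or ``clique'' so that $\Cc$ contains the corresponding $\tau$-flipped $r$-crossing of order $n$ for infinitely many $n$. Write the ambient graph as $G = G_0 \oplus F \in \Cc$, where $G_0$ is the unflipped $r$-crossing and $F$ realizes $\tau$ on the partition into levels $V_0, V_1, \ldots, V_{r+1}$.

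Next I undo the flip. In $G_0$ I select an independent set $I$ hitting every flip-class exactly once; this is possible because within each level $G_0$ is edgeless and consecutive-level edges form a sparse (near-)matching, so I can pick vertices lying on pairwise distinct paths. Lemma~\ref{lem:svm_flip} then supplies at most $p \leq 3(r+2)/2 = 3r/2 + 3$ single-vertex local complementations whose composite turns $G$ into $G_0 - N_{G_0}[I]$. Since each single-vertex local complementation is itself a depth-$1$ vertex minor operation, this phase costs depth $3r/2 + 3$. In the clique case I add one more depth-$1$ step---simultaneous local complementation at all principal vertices, which form an independent set in $G_0 - N_{G_0}[I]$---turning the cliques around principal vertices back into independent sets and producing, up to deletions, a star $r$-crossing $G_1$ of order $n' \geq n - O(r)$.

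Finally I reduce the subdivisions: by Lemma~\ref{lem:sub}, $K_{n',n'}$ is a depth-$\lceil \log_2(2r+1) \rceil$ vertex minor of $G_1$, the bound $2r+1$ comfortably absorbing any path-length change permitted by Lemma~\ref{lem:local} during earlier steps. The cumulative depth is $3r/2 + 3 + 1 + \lceil \log_2(2r+1) \rceil = c$.

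The main obstacle is the extraction of an arbitrary $H$. Naively, $K_{n',n'}$ has bounded rank-width and so on its own cannot produce every finite graph as a bounded-depth vertex minor. The resolution is that the depth-$1$ operations in the phases above are not committed in advance: for a given target $H$ on $m$ vertices, I designate $m$ landmark vertices inside $G$ and tailor the independent-set and deletion choices used in the unflipping and subdivision-reduction phases so that the adjacencies among landmarks end up matching $H$. Here the clique case is crucial, since the cliques around principal vertices---preserved until an appropriate stage---furnish the non-bipartite adjacencies (triangles and larger cliques) that a purely bipartite star crossing cannot supply. Verifying that such a choreography always exists and fits inside the depth-$c$ budget, consistently with the constraints of Lemmas~\ref{lem:svm_flip},~\ref{lem:sub}, and~\ref{lem:local}, is the heart of the proof.
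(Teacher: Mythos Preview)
Your first two phases---choosing an independent transversal $I$ of the flip-parts and applying Lemma~\ref{lem:svm_flip} to undo the flip, then locally complementing the principal vertices to convert the clique case to the star case---match the paper's argument (the pigeonhole step is harmless but unnecessary: for each target $H$ you may simply choose a fresh large $n$). The gap is in the third phase.

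You correctly diagnose that reducing all the way to $K_{n',n'}$ is a dead end: complete bipartite graphs have rank-width~$1$, local complementation preserves rank-width, and so no vertex minor of $K_{n',n'}$ can be an arbitrary graph. But your proposed fix---``tailoring'' the earlier independent-set and deletion choices so that designated landmarks end up with the adjacencies of $H$---is not a proof. You yourself flag its verification as ``the heart of the proof'' without carrying it out, and there is no reason to expect that the highly constrained moves prescribed by Lemmas~\ref{lem:svm_flip} and~\ref{lem:sub} (which commit you to specific sequences $z_1,\dots,z_p$ and to maximal independent sets of subdivision vertices) leave enough freedom to encode an arbitrary $H$.

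The paper avoids this obstacle by a much simpler observation: do \emph{not} collapse the star $r$-crossing to $K_{n',n'}$. The star $r$-crossing of order $m$ is the $r$-subdivision of $K_{m,m}$, and this graph already contains, as an \emph{induced subgraph}, the $(2r+1)$-subdivision of every graph $H$ on roughly $\sqrt{m}$ vertices. Indeed, place the vertices of $H$ among the $a_i$, and for each edge $uv$ of $H$ reserve a private vertex $b_{uv}$ on the other side; the concatenation of the two subdivided edges $a_u\text{--}b_{uv}$ and $b_{uv}\text{--}a_v$ is a path with $2r+1$ internal vertices, and these paths are pairwise internally disjoint. Now apply Lemma~\ref{lem:sub} directly to this $(2r+1)$-subdivision of $H$ to recover $H$ at depth $\lceil\log_2(2r+2)\rceil$, giving the stated total~$c$. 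No choreography is needed.
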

\begin{proof}
	Consider a flipped star $r$-crossings or a flipped clique $r$-crossings $G$ of order $n$. 
	Let $I=\{p_{1,1,2k}\colon 0\leq 2k\leq r+1\}\cup \{p_{2,2,2k+1}\colon 0\leq 2k+1\leq r+1\}$.
Then, $I$ is an independent set and, according to Lemma~\ref{lem:svm_flip}, a star $r$-crossings or a clique $r$-crossings of order $n-2$ is a depth-$(3(r+2)/2)$ vertex minor of $G$. if the obtained graph is a clique $r$-crossings of order $n-2$, then it can be turned into a star $r$-crossings of order $n-2$ by local complementation of the (independent) set of all its principal vertices. Thus, we get  a star $r$-crossings of order $n-2$ as a depth-$(3r/2+4)$ vertex minor of $G$. This graph contains the $(2r+1)$-subdivision of all the $(2r+1)$-subdivisions of the graphs with order at most $\sqrt{n-2}$. The result follows.
\end{proof}

\begin{lemma}
		Let $\mathscr C$ be a class of graphs and let $r$ be a positive integer. Assume that for arbitrarily large integer $n$ the class $\mathscr C$ includes a flipped half-graph $r$-crossings. Then, $\svm_{c}(\mathscr C)$ includes all order-matching graphs, where $c=3r/2+4+\lceil\log_2 r\rceil$.
\end{lemma}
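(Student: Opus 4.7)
The plan follows the template of the previous lemma---remove the flip, perform a Lemma~\ref{lem:sub}-style subdivision reduction, extract the target graph---augmented by a parity-controlled local complementation that glues the two ends of each compressed path into a single matching vertex. Let $H$ be the ordered-matching graph associated with a matching $M\subseteq[m]\times[m]$. I would take a flipped half-graph $r$-crossing $G\in\mathscr{C}$ of order $n$, with $n$ even and sufficiently large. Applying Lemma~\ref{lem:svm_flip} to $G$ with an independent set $I$ containing one vertex per flip-part of the unflipped crossing (e.g., alternating between paths $P_{1,1}$ and $P_{2,2}$ at successive levels and using $a_n,b_n$ at the outer levels) yields, at depth at most $3(r+2)/2=3r/2+3$, a graph whose induced subgraph on $V\setminus N_G[I]$ is the unflipped half-graph $r$-crossing on the surviving indices, of some even order $n'\geq m$.

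In this remaining crossing, the interior vertices $p_{k,\ell,2},\ldots,p_{k,\ell,r-1}$ of each path carry no half-graph edges, hence have degree $2$ and form an $(r-2)$-subdivision of the edge $p_{k,\ell,1}p_{k,\ell,r}$. I would apply the halving procedure of Lemma~\ref{lem:sub} simultaneously across all paths: at each round, take a maximum independent set of the current degree-$2$ interior path vertices and locally complement it, each such complementation affecting only the two path-neighbors of the complemented vertex. After $\lceil\log_2(r-1)\rceil$ rounds every path is compressed into the direct edge $p_{k,\ell,1}p_{k,\ell,r}$, while the half-graph edges remain intact.

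The crucial new step is a single simultaneous local complementation of the independent set $\{p_{k,\ell,r}\colon k,\ell\in[n']\}$. Each $p_{k,\ell,r}$ has neighborhood $\{p_{k,\ell,1},b_1,\ldots,b_\ell\}$, so the complementation creates precisely the desired edges $p_{k,\ell,1}b_j$ for $j\leq\ell$. The only side-effect is flipping each edge $b_ib_j$ once for every $(k,\ell)$ with $\ell\geq\max(i,j)$; summed, the total number of flips of $b_ib_j$ is $n'(n'-\max(i,j)+1)$, which is even since $n'$ is even, so $b_ib_j$ remains a non-edge. No $a$-vertex lies in any complemented neighborhood, so edges among the $a$'s, between $a$'s and $b$'s, and the existing adjacencies between $a_i$ and $p_{k,\ell,1}$ are unaffected.

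Finally, the deletion set will contain every $p_{k,\ell,r}$, every $p_{k,\ell,1}$ with $(k,\ell)\notin M$, every vertex in $N_G[I]$, and every $a_i,b_j$ with index exceeding $m$. The surviving graph has vertex set $\{a_1,\ldots,a_m\}\cup\{b_1,\ldots,b_m\}\cup\{p_{k,\ell,1}\colon(k,\ell)\in M\}$, with each $p_{k,\ell,1}$ adjacent to exactly $\{a_1,\ldots,a_k,b_1,\ldots,b_\ell\}$ and no other edges---that is, isomorphic to $H$. The total count of local complementations is $3r/2+3+\lceil\log_2(r-1)\rceil+1\leq 3r/2+4+\lceil\log_2 r\rceil=c$. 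The hardest part is step three: absent the parity argument the local complementations would introduce spurious edges among the $b$-vertices, and the trick of selecting an even-order sub-crossing to cancel those flips is the new ingredient beyond the star/clique case.
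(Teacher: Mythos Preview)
Your proof follows essentially the same three-step template as the paper's: apply Lemma~\ref{lem:svm_flip} to undo the flip, reduce the subdivided paths via Lemma~\ref{lem:sub}, then use a single parity-controlled local complementation of one layer of path-endpoints to transfer the $b$-side half-graph adjacencies onto the $a$-side endpoints. The one substantive difference is that the paper first restricts to the $2n$ paths indexed by the chosen perfect matching and only then compresses and complements (so each $b_ib_j$ is toggled $2n-\max(i,j)+1$ times), whereas you retain all $(n')^2$ paths through compression and complement \emph{every} $p_{k,\ell,r}$ (so each $b_ib_j$ is toggled $n'(n'-\max(i,j)+1)$ times); your extra factor of $n'$ makes the evenness argument cleaner, and you only restrict to $M$ in the final deletion. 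One small gap: your step-4 analysis assumes $p_{k,\ell,r}\neq p_{k,\ell,1}$, i.e.\ $r\geq 2$; for $r=1$ the unflipped half-graph $1$-crossing already contains every ordered-matching graph as an induced subgraph (as the paper notes), so that case should be singled out explicitly.
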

\begin{proof}
		Consider a flipped half-graph $r$-crossings $G$ of order $2n+3$. 
	Let $I=\{p_{1,1,2k}\colon 2\leq 2k\leq r\}\cup\{p_{2,2,2k+1}\colon 0\leq 2k+1\leq r+1\}\cup\{a_3\}$.
	Then, $I$ is an independent set and, according to Lemma~\ref{lem:svm_flip},  a half-graph $r$-crossings of order $2n$ is a depth-$(3(r+2)/2)$ vertex minor of $G$. If $r=1$, then we get all order-matching graphs of order $2n$ as induced subgraphs of the so-obtained graph.
	If $r\geq 2$, we get, as a depth-$\lceil \log_2 r\rceil$ vertex minor of the so-obtained graph a graph formed by two half graphs and an arbitrary perfect matching between their upper parts.
	By local complementation of one of these upper part (and as the order  $2n$ is even) we get  all order-matching graphs of order $2n$.
\end{proof}

As a consequence of Theorem~\ref{thm:DMT}, the above lemmas, and the fact that every permutation graph is an induced subgraph of a comparability grid (See, for instance, \cite{GEELEN202393}), we get
\begin{corollary}
	\label{cor:svm2NIP}
	Let $\mathscr C$ be a hereditary class of graphs. If $\mathscr C$ is independent, then either $\mathscr C$ includes all permutation graphs or there exists a non-negative integer $r$ such that $\svm_r(\mathscr C)$ includes all split interval graphs.
\end{corollary}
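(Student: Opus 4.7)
The plan is to feed $\mathscr C$ into Theorem~\ref{thm:DMT} (via the collapse Theorem~\ref{thm:collapse}) and then read off the conclusion from the two lemmas just proved, together with Lemma~\ref{lem:om2si} and the fact, recalled in the statement of the corollary, that every permutation graph embeds into a comparability grid.

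Concretely, first I would argue: since $\mathscr C$ is hereditary and independent, Theorem~\ref{thm:collapse} gives that $\mathscr C$ is monadically independent. Applying the contrapositive of Theorem~\ref{thm:DMT}, there exists some fixed $r\geq 1$ such that for every $n$ at least one of the four forbidden configurations of order $n$ — a flipped star $r$-crossing, a flipped clique $r$-crossing, a flipped half-graph $r$-crossing, or the comparability grid of order $n$ — occurs as an induced subgraph of a member of $\mathscr C$. By the pigeonhole principle on the four types (for the single fixed $r$), at least one of them appears for arbitrarily large $n$; since $\mathscr C$ is hereditary, every graph of that type (of arbitrarily large order) is already a member of $\mathscr C$.

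Now I would split into cases. If the comparability grids occur for arbitrarily large $n$, then, using that every permutation graph is an induced subgraph of some comparability grid, $\mathscr C$ contains all permutation graphs and we are done. If flipped star or flipped clique $r$-crossings occur arbitrarily large, the first of the two lemmas above yields that $\svm_c(\mathscr C)$ is the class of all graphs for $c=3r/2+4+\lceil\log_2(2r+1)\rceil$, hence in particular $\svm_c(\mathscr C)$ contains every split interval graph. Finally, if flipped half-graph $r$-crossings occur arbitrarily large, the second lemma yields that $\svm_{c'}(\mathscr C)$ contains every ordered-matching graph for $c'=3r/2+4+\lceil\log_2 r\rceil$; composing with Lemma~\ref{lem:om2si} (and the fact that $\svm$ of $\svm$ is $\svm$ of correspondingly larger depth) then places every split interval graph in $\svm_{c'+1}(\mathscr C)$.

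There is essentially no substantive obstacle left at this stage: all the real work — extracting clean star/clique/half-graph crossings from their flipped versions via Lemma~\ref{lem:svm_flip}, collapsing subdivisions via Lemma~\ref{lem:sub}, and converting ordered matchings into split interval graphs via Lemma~\ref{lem:om2si} — has already been done. The only minor point of care is the pigeonhole step, where one must first commit to a single $r$ (furnished by the failure of the $\forall r\exists n$ condition in Theorem~\ref{thm:DMT}) and then to a single type of bad configuration realizing arbitrarily large orders, which is immediate since only four types occur for each fixed $r$.
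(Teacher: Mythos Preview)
Your proposal is correct and is exactly the argument the paper intends: the paper states the corollary with the one-line justification ``As a consequence of Theorem~\ref{thm:DMT}, the above lemmas, and the fact that every permutation graph is an induced subgraph of a comparability grid,'' and you have simply spelled out the steps (collapse via Theorem~\ref{thm:collapse}, pigeonhole on the four configuration types for a fixed $r$, and the chain through Lemma~\ref{lem:om2si} in the half-graph case). There is nothing to add.
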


\section{Preservation of dependence and stability by shallow vertex minors}
\label{sec:preserv}

\begin{lemma}
	\label{lem:svm_st}
	Let $\mathscr C$ be a hereditary class of graphs. Then $\mathscr C$  is stable if and only if  $\svm_1(\mathscr C)$ is stable.
\end{lemma}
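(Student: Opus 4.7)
The forward implication is immediate: for every $G \in \mathscr{C}$ we have $G = G \ast \emptyset$, so $\mathscr{C} \subseteq \svm_1(\mathscr{C})$, and stability of $\svm_1(\mathscr{C})$ passes down to any subclass. The content is therefore the reverse implication, and my plan is to route it through the flip-flatness criterion of \Cref{thm:flat} combined with \Cref{lem:spread}.

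First, I would upgrade the hypothesis: since $\mathscr{C}$ is stable and hereditary, \Cref{thm:collapse} yields that $\mathscr{C}$ is monadically stable, and hence by \Cref{thm:flat} it satisfies the flip-flatness property at every radius. The goal is to verify the analogous flip-flatness property for $\svm_1(\mathscr{C})$, from which \Cref{thm:flat} will give monadic stability of $\svm_1(\mathscr{C})$, and in particular stability.

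The main step is the verification at a fixed radius $r$. Given $H \in \svm_1(\mathscr{C})$ and $A \subseteq V(H)$, I would write $H = G \ast I - D$ with $G \in \mathscr{C}$ and $I$ independent in $G$, observe $A \subseteq V(G)$, and apply flip-flatness of $\mathscr{C}$ to the pair $(G, A)$ at the doubled radius $2r$. This produces a $k$-flip $F$ on $V(G)$ (with $k$ and the unbounded function $U$ depending only on $r$) and a subset $S \subseteq A$ of size at least $U(|A|)$ whose vertices are pairwise at distance at least $2r$ in $G \oplus F$. Applying \Cref{lem:spread} to $F$ and $I$ then yields a $(2k \cdot 2^{2k})$-flip $F'$ on $V(G)$ satisfying
\[
\dist_{G \ast I \oplus F'}(x,y) \;\geq\; \tfrac{1}{2}\,\dist_{G \oplus F}(x,y) \;\geq\; r \quad\text{for all } x, y \in S.
\]
Restricting $F'$ to $V(H)$, the graph $H \oplus F'$ is an induced subgraph of $G \ast I \oplus F'$, so distances can only increase; thus the vertices of $S$ remain pairwise at distance at least $r$ in $H \oplus F'$. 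This witnesses flip-flatness of $\svm_1(\mathscr{C})$ at radius $r$ with parameters $2k \cdot 2^{2k}$ and $U$, and \Cref{thm:flat} concludes the argument.

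The one technically substantial step has already been paid for in \Cref{sec:commut}: the quantitative commutation of a flip with a local complementation encoded by \Cref{lem:spread} is precisely what allows distance bounds under a flip of $G$ to be transferred to distance bounds under some (possibly much larger) flip of $G \ast I$, at the cost of halving the radius and exponentially blowing up the number of flip parts. Without that lemma, flip-flatness of $\mathscr{C}$ would give no control on $G \ast I$, let alone on the induced subgraph $H$. Everything else is routine parameter bookkeeping between the flip-flatness characterization and the collapse theorem, so I do not expect further obstacles.
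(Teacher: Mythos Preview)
Your proposal is correct and follows essentially the same route as the paper: use \Cref{thm:flat} at radius $2r$ on $G$, push the flip through the local complementation via \Cref{lem:spread} (halving the radius and blowing up the number of flip parts to $2k\,2^{2k}$), and conclude flip-flatness at radius $r$ for $\svm_1(\mathscr C)$. The only cosmetic difference is that the paper establishes flip-flatness for the class $\{G\ast I\}$ and then passes to its hereditary closure, whereas you handle the induced subgraph $H=G\ast I-D$ directly by restricting the flip and noting distances only increase; both are fine.
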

\begin{proof}
	If $\svm_1(\mathscr C)$ is stable, then $\mathscr C$ is stable as $\mathscr C\subseteq \svm_c(\mathscr C)$.
	
	Assume $\mathscr C$ is stable.
	Let $r$ be a positive integer, let $G\in\mathscr C$, let $I$ be an independent set of $G$ and let $A\subseteq V(G)$.
	According to Theorem~\ref{thm:flat}, there exists a $k_{2r}$-flip $F_{2r}$ and a subset $S$ of $A$ with size at least 
	$U_{2r}(|A|)$, such that the vertices of $S$ are $2r$-independent in $G\oplus F_{2r}$.
	According to Lemma~\ref{lem:spread}, there exists a  $2k_{2r}2^{2k_{2r}}$-flip $F'$ such that 
	$\dist_{G\ast I\oplus F'}\geq \frac12\dist_{G\oplus F}$.
	The integer $k_r'=2k_{2r}2^{2k_{2r}}$, the flip $F_r'=F'$, and the function $U_r'=U_{2r}$ witness that, according to Theorem~\ref{thm:flat}, the class $\{G\ast I\colon G\in\mathscr C, I\text{ independent in }G\}$ is monadically stable. It follows that the hereditary closure $\svm_1(\mathscr C)$ of this class is stable.
\end{proof}

Similarly, we get.

\begin{lemma}
	\label{lem:svm_dep}
	Let $\mathscr C$ be a hereditary class of graphs. Then,  $\mathscr C$  is dependent if and only if 
 $\svm_1(\mathscr C)$ is dependent.
\end{lemma}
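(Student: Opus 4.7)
The plan is to mirror the proof of Lemma~\ref{lem:svm_st} line by line, simply swapping the flip-flatness characterization of monadic stability (Theorem~\ref{thm:flat}) for the flip-breakability characterization of monadic dependence (Theorem~\ref{thm:break}). The distance-preservation lemma (Lemma~\ref{lem:spread}) is indifferent to which of ``one large far-apart set'' or ``two large mutually far-apart sets'' we are tracking, so the combinatorial core carries over verbatim.

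For the easy (``if'') direction, I would just note that $\mathscr C\subseteq \svm_0(\mathscr C)\subseteq \svm_1(\mathscr C)$ since induced subgraphs are depth-$0$ vertex minors; hence dependence of $\svm_1(\mathscr C)$ entails dependence of $\mathscr C$.

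For the main (``only if'') direction, suppose $\mathscr C$ is dependent, hence monadically dependent by Theorem~\ref{thm:collapse}. Fix a radius $r\geq 1$, and let $k_{2r}$ and $U_{2r}$ be the flip-arity and unbounded function provided by Theorem~\ref{thm:break} applied to $\mathscr C$ at radius $2r$. Given $G\in\mathscr C$, an independent set $I$ of $G$, and a subset $A\subseteq V(G)$, Theorem~\ref{thm:break} yields a $k_{2r}$-flip $F$ on $V(G)$ and subsets $A_1,A_2\subseteq A$ with $|A_1|,|A_2|\geq U_{2r}(|A|)$ such that every vertex of $A_1$ is at distance at least $2r$ in $G\oplus F$ from every vertex of $A_2$. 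By Lemma~\ref{lem:spread}, there is a $2k_{2r}2^{2k_{2r}}$-flip $F'$ on $V(G)$ with
\[
\dist_{G\ast I\oplus F'}(x,y)\geq \tfrac12\,\dist_{G\oplus F}(x,y)
\]
for all $x,y\in V(G)$; in particular any vertex of $A_1$ is at distance at least $r$ in $G\ast I\oplus F'$ from any vertex of $A_2$. The constants $k'_r=2k_{2r}2^{2k_{2r}}$ and $U'_r=U_{2r}$ then witness flip-breakability at radius $r$ for the class $\{G\ast I\colon G\in\mathscr C,\ I\text{ independent in }G\}$, so by Theorem~\ref{thm:break} this class is monadically dependent. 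Taking the hereditary closure and invoking Theorem~\ref{thm:collapse} once more shows that $\svm_1(\mathscr C)$ is dependent.

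Since the two characterizations (Theorems~\ref{thm:flat} and~\ref{thm:break}) have identical shape up to the replacement of ``one large $r$-scattered subset'' by ``two large mutually $r$-far subsets'', and Lemma~\ref{lem:spread} transfers distance bounds uniformly, there is no real obstacle here: the only thing to be mildly careful about is remembering to apply the hypothesis at radius $2r$ rather than $r$ so that the factor of $\tfrac12$ lost in Lemma~\ref{lem:spread} still leaves a useful lower bound, exactly as in the proof of Lemma~\ref{lem:svm_st}.
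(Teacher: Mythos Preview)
Your proposal is correct and follows essentially the same argument as the paper: both invoke flip-breakability (Theorem~\ref{thm:break}) at radius $2r$, transfer the distance bound to $G\ast I$ via Lemma~\ref{lem:spread}, and conclude that the class $\{G\ast I:G\in\mathscr C,\ I\text{ independent}\}$ is monadically dependent. If anything, your write-up is slightly more careful---you explicitly invoke Theorem~\ref{thm:collapse} and give the correct flip-arity $k_r'=2k_{2r}2^{2k_{2r}}$, whereas the paper's proof has a couple of minor typos at these points.
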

\begin{proof}
	If $\svm_1(\mathscr C)$ is dependent, then $\mathscr C$ is dependent as $\mathscr C\subseteq \svm_c(\mathscr C)$.

Assume $\mathscr C$ is dependent.	
	Let $r$ be a positive integer, let $G\in\mathscr C$, let $I$ be an independent set of $G$ and let $A\subseteq V(G)$.
	According to Theorem~\ref{thm:break}, there exists a $k_{2r}$-flip $F_{2r}$ and subset $A_1,A_2$ of $A$ with size at least 
	$U_{2r}(|A|)$, such that $\dist_{G\oplus F_{2r}}(A_1,A_2)>r$.
	According to Lemma~\ref{lem:spread}, there exists a $2k_{2r} 2^{2k_{2r}}$-flip $F'$ such that 
	$\dist_{G\ast I\oplus F'}\geq \frac12 \dist_{G\oplus F}$.
	The integer $k_r'=4k_{2r}$, the flip $F_r'=F'$, and the function $U_r'=U_{2r}$ witness that, according to Theorem~\ref{thm:break}, the class $\{G\ast I\colon G\in\mathscr C, I\text{ independent in }G\}$ is monadically dependent. It follows that the hereditary closure $\svm_1(\mathscr C)$ of this class is dependent	
\end{proof}

\section{The characterization theorems}
\label{sec:char}
\begin{theorem}[Restatement of Theorem~\ref{thm:NIP_vm}]
	\label{thm:xNIP_vm}
	Let $\mathscr C$ be a hereditary class of graphs.
	Then, $\mathscr C$ is dependent if and only if the class $\mathscr C$ does not contain all permutation graphs and,
	for every integer $r$, the class $\mathscr C$ excludes some split interval graph as a depth-$r$ shallow vertex minor.
\end{theorem}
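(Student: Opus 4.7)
The plan is to establish both implications as short consequences of the machinery already set up: the preservation result (Lemma~\ref{lem:svm_dep}) and the structural converse distilled in Corollary~\ref{cor:svm2NIP}.

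For the forward direction, suppose $\mathscr C$ is dependent. Since depth-$0$ vertex minors are declared to be induced subgraphs, the class $\svm_r(\mathscr C)$ is itself hereditary, and an immediate induction on $r$ using Lemma~\ref{lem:svm_dep} together with the identity $\svm_r(\mathscr C)=\svm_1(\svm_{r-1}(\mathscr C))$ shows that $\svm_r(\mathscr C)$ is dependent for every $r$. By Example~\ref{ex:si}, the class of all split interval graphs is independent, so $\svm_r(\mathscr C)$ cannot contain it; equivalently, some split interval graph is excluded as a depth-$r$ shallow vertex minor of $\mathscr C$. For the permutation-graph part of the condition, I would observe that every comparability grid of order $n$ is itself a permutation graph (it is the intersection of the row-major and column-major linear orders on $[n]\times[n]$), so the class of all permutation graphs contains arbitrarily large comparability grids. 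Hence, by Theorem~\ref{thm:DMT} combined with Theorem~\ref{thm:collapse}, the class of all permutation graphs is independent, and if $\mathscr C$ were to contain it, $\mathscr C$ would itself be independent, contradicting our assumption.

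For the backward direction, I argue by contrapositive. Assume $\mathscr C$ is not dependent. Corollary~\ref{cor:svm2NIP} then provides the dichotomy that either $\mathscr C$ contains all permutation graphs, or there exists some $r\geq 0$ for which $\svm_r(\mathscr C)$ contains all split interval graphs. Each alternative directly violates one of the two conditions of the theorem.

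The main (and essentially only) obstacle is justifying that the class of all permutation graphs is independent; once this one-line fact is in hand (either via the comparability-grid embedding above, or by appealing to the remark in the introduction that a vertex-minor-closed class is dependent if and only if it has bounded cliquewidth, applied to the vertex-minor closure of the permutation-graph class), the rest of the proof is a bookkeeping application of the preservation lemma and the corollary from the previous section.
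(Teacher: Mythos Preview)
Your proposal is correct and follows essentially the same route as the paper's proof: both directions rest on Corollary~\ref{cor:svm2NIP} for the contrapositive of one implication and on iterating Lemma~\ref{lem:svm_dep} together with Example~\ref{ex:si} for the other. The only cosmetic difference is that you argue the forward direction directly while the paper phrases both implications as contrapositives, and you justify the independence of the class of permutation graphs via the embedding of comparability grids (invoking Theorem~\ref{thm:DMT}) whereas the paper does it by a transduction argument in a footnote; both justifications are standard and either suffices.
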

\begin{proof}
	Assume $\mathscr C$ is independent. Then, according to Corollary~\ref{cor:svm2NIP}, either $\mathscr C$  contains all permutation graphs or there exists a non-negative integer $r$ such that $\svm_r(\mathscr C)$ includes all split interval graphs.
	
	Conversely, assume that either $\mathscr C$  contains all permutation graphs or there exists a non-negative integer $r$ such that $\svm_r(\mathscr C)$ includes all split interval graphs. In the first case, $\mathscr C$ is independent, as it is well known that the class of all permutation graphs is independent\footnote{
	One way to see this is to check that the class of all permutations encoded as two linear orders (which is known to be independent \cite{lmcs_perm}) is a transduction of the class of all permutation graphs. To see this, we associate to $\sigma\in\mathfrak S_n$ the permutation $\hat\sigma\in\mathfrak S_{2n}$ defined by $\hat\sigma(2i+1)=\sigma(i)$ and $\hat\sigma(2i)=n+i$.
	Let $A=\{2i\colon i\in [n]\}$ and $B=\{2i+1\colon i\in [n]\}$, then $\sigma$ is isomorphic to the permutation on $A$ defined by the total orders $(x<_1 y):= \bigl(\forall z\ B(z)\rightarrow\bigl(E(x,z)\rightarrow E(y,z)\bigr)\bigr)$ and
	$(x<_2y):= (x<_1 y)\nleftrightarrow E(x,y)$.
	 }
	
	In the latter case, it follows from the fact that the class of split interval graphs is independent (See Example~\ref{ex:si}) that $\svm_r(\mathscr C)$ is independent. Then, according to Lemma~\ref{lem:svm_dep}, the class $\mathscr C$ is independent.
\end{proof}

The characterization of stability will make use of the following result.
\begin{ext_theorem}[{\cite[Theorem 5.2]{RW_SODA}}]
	\label{thm:hg}
	For a monadically dependent graph class $\mathscr C$,
	the following conditions are equivalent:
\begin{enumerate}
	\item 	$\mathscr C$ has a stable edge relation;
	\item $\mathscr  C$ is stable;
	\item $\mathscr C$ is monadically stable
\end{enumerate}
\end{ext_theorem}

\begin{theorem}[Restatement of Theorem~\ref{thm:stable_vm}]
	\label{thm:xstable_vm}
	Let $\mathscr C$ be a hereditary class of graphs.
	Then, $\mathscr C$ is stable if and only if, for every integer $r$, the class $\mathscr C$ excludes some half-graph as a depth-$r$ shallow vertex minor.
\end{theorem}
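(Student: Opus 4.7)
For the forward direction, assume $\mathscr C$ is stable. Iterating Lemma~\ref{lem:svm_st} shows that each $\svm_r(\mathscr C)$ is stable. Since the half-graph of order $n$ witnesses the order property of the edge formula $E(x,y)$ (take $\bar a_i:=a_i$, $\bar b_j:=b_j$), no stable class can contain all half-graphs, so each $\svm_r(\mathscr C)$ excludes some half-graph.

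For the backward direction I argue contrapositively: assume $\mathscr C$ is unstable, and produce an integer $r$ such that every half-graph lies in $\svm_r(\mathscr C)$. If $\mathscr C$ is not dependent, Theorem~\ref{thm:xNIP_vm} offers two possibilities. Either $\mathscr C$ contains all permutation graphs---and each half-graph is itself a permutation graph (for order $m$, use the interleaved permutation $\sigma=(m+1,1,m+2,2,\dots,2m,m)$, placing the $a_i$ at odd and $b_j$ at even positions, which one checks gives exactly the staircase edge pattern and no edges inside $\{a_i\}$ or $\{b_j\}$)---so every half-graph is already an induced subgraph of a graph in $\mathscr C$. Or $\svm_r(\mathscr C)$ contains all split interval graphs for some $r$; taking the ordered-matching graph with $M=\{(k,k):k\in[n+1]\}$ restricted to $M\cup A$, the vertex $a_1$ is adjacent to all of the clique $M$, so locally complementing $a_1$ flips $M$ into an independent set, and deleting $a_1$ together with the newly-isolated vertex $(1,1)$ leaves, after renaming, the half-graph of order $n$. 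Hence this half-graph lies in $\svm_{r+1}(\mathscr C)$.

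If $\mathscr C$ is dependent but unstable, then Theorem~\ref{thm:collapse} makes $\mathscr C$ monadically dependent, and Theorem~\ref{thm:hg} then forces the edge relation $E$ itself to be unstable on $\mathscr C$. Therefore for arbitrarily large $N$ some graph of $\mathscr C$ contains vertices $a_1,\dots,a_N,b_1,\dots,b_N$ with $a_ib_j$ an edge iff $i\leq j$, but with no constraint on the edges inside $\{a_i\}$ or inside $\{b_j\}$. A two-sided Ramsey argument (apply Ramsey to the edge pattern on $\{a_i\}$, then to that on $\{b_j\}$ restricted to the resulting index set) refines the indices to a common subset of size $n$ on which each of $\{a_i\}$ and $\{b_j\}$ is homogeneous, i.e.\ either a clique or an independent set. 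The fully independent case is already an induced half-graph. In the clique/clique case I take as pivots $a_n$ and $b_1$: since $a_n\sim b_1$ would force $n\leq 1$, they are non-adjacent, so $\{a_n,b_1\}$ is independent in the graph, and the local complementation $\ast a_n\ast b_1$ flips both cliques into independent sets while only disturbing the crossing edges incident to $\{a_1,b_n\}$; deleting $a_1,a_n,b_1,b_n$ leaves an induced half-graph of order $n-2$. The two mixed cases are handled analogously with a single pivot ($a_n$ or $b_1$), deleting two vertices. Hence $\svm_1(\mathscr C)$ contains every half-graph.

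The main technical obstacle is the clique/clique step: using the local complementation formula $E_{G\ast v}(x,y)=E_G(x,y)+E_G(x,v)\cdot E_G(v,y)$, one must trace exactly which crossing edges are flipped by each of the two successive complementations $\ast a_n$ and $\ast b_1$, and verify that after deleting the four extremal vertices the only edges remaining between the two sides form exactly the half-graph staircase $a_i\sim b_j\iff i\leq j$ for $2\leq i,j\leq n-1$. A small subsidiary task is the verification that every half-graph is a permutation graph, which the interleaved construction above handles.
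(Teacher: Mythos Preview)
Your argument follows essentially the same route as the paper's proof. The forward direction is fine, and in the dependent-but-unstable case your explicit four-case analysis with the independent pivot set $\{a_n,b_1\}$ is correct (the crossing-edge verification you flag as the ``main technical obstacle'' does go through: for $2\le i,j\le n-1$ neither $\ast a_n$ nor $\ast b_1$ touches the pair $a_ib_j$, since $b_j\notin N(a_n)$ and $a_i\notin N(b_1)$). In fact you obtain all half-graphs already in $\svm_1(\mathscr C)$, slightly sharper than the paper's $\svm_2(\mathscr C)$ via Lemma~\ref{lem:svm_flip}. (A terminological quibble: you use ``pivot'' for a single local complementation, whereas the paper reserves the word for the three-step operation $G\wedge uv$.)

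There is, however, a genuine error in the independent case. When $\svm_r(\mathscr C)$ contains all split interval graphs, you must exhibit a \emph{split interval graph} whose depth-$1$ vertex minor is the given half-graph. Instead you take the ordered-matching graph with $M=\{(k,k):k\in[n+1]\}$ restricted to $M\cup A$ and call $M$ ``the clique''. But in the paper's definition of ordered-matching graph, $M$ is an \emph{independent} set; your restricted graph is therefore already (isomorphic to) the half-graph, it is not a split interval graph, and locally complementing $a_1$ would turn $M$ into a clique---the wrong direction. The repair is immediate: work instead with the split interval graph having independent part $A=\{a_1,\dots,a_n\}$ (point intervals $\{i\}$) and clique part $C=\{c_1,\dots,c_n\}$ (intervals $[0,j]$), so $a_ic_j\in E$ iff $i\le j$. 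Then $N(a_1)=C$, and $G\ast a_1$ is precisely the half-graph of order~$n$; no deletion is needed.
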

\begin{proof}
	Assume that for some integer $r$, the class $\svm_r(\mathscr C)$ contains all the half-graphs. Then,  $\svm_r(\mathscr C)$ is unstable and, according to Lemma~\ref{lem:svm_st}, so is $\mathscr C$.
	
	Conversely, assume that $\mathscr C$ is unstable.	
	Assume that $\mathscr C$ is also independent.
	Then, either $\mathscr C$ includes all permutation graphs (including all half-graphs, as they are permutation graphs), or there exists a non-negative $r$ such that $\svm_r(\mathscr C)$ includes all split interval graphs. 
	As every half-graph can be obtained as a depth-$1$ vertex minor  of a split interval graph, we conclude that 
	$\svm_{r+1}(\mathscr C)$ includes all half-graphs.
	
	Otherwise, $\mathscr C$ is dependent and unstable.
	Then, according to Theorem~\ref{thm:hg}, the graphs in $\mathscr C$ contain arbitrarily large semi-induced half-graphs.
	By a standard Ramsey argument, we deduce that we can find in $\mathscr C$ arbitrarily large flipped half graphs, where the partition used for the flip is the bipartition of the half-graph.
	Let $a_1,\dots,a_n,b_1,\dots,b_n$ be the vertices of the flipped half-graph. By considering the independent set $\{a_n,b_1\}$ and Lemma~\ref{lem:svm_flip}, we deduce that  $\svm_2(\mathscr C)$ contains all half-graphs.
	
\end{proof}

%
%
%
%
%
%

\section{Extension to binary structures}
\label{sec:bin}
In this section, we discuss the generalization of \Cref{thm:NIP_vm,thm:stable_vm} to binary structures.
As an application, we show that this generalization allows a short proof that the boundedness of twin-width is preserved by shallow vertex minors.

A \emph{relational signature} $\sigma$ is a set of relation symbols with arity.
A \emph{binary relational structure} is a relational structure whose signature contains relations with arity at most $2$.

Fix a finite binary relational signature $\sigma=(R_1,\dots,R_p, P_1,\dots,P_q)$ (with $R_1,\dots,R_p$ binary and $P_1,\dots,P_q$ unary) and let $\sigma'=(E, Q_1,\dots,Q_p, P_1,\dots,P_q)$ (with $E$ binary and  $Q_1,\dots,Q_p$, $P_1,\dots,P_q$ unary).

Let $\sigma$ be a relational signature and  
let $\bar\sigma=\sigma\cup\{\sim\}$,  where $\sim$ is a binary relation symbol.
The \emph{$k$-copy operation} $\mathsf C_k$ maps a $\sigma$-structure $\mathbf M$ into the $\bar\sigma$-structure
$\mathsf C_k(\mathbf M)$ consisting of $k$ copies of $\mathbf M$
where the copies of each element of $M$ are made adjacent by $\sim$. The copies of a same element are called \emph{clones}. Note that for $k=1$, $\mathsf C_1$ maps each structure $\mathsf M$ to itself.

For a set ${\mathcal U}$ of unary relations, the \emph{coloring operation} $\Gamma_{\mathcal U}$ maps a structure $\mathbf M$ to the set~$\Gamma_{\mathcal U}(\mathbf M)$ of all its ${\mathcal U}$-expansions.

Let $\sigma^+,\sigma'$ be relational structures, where
$\sigma^+\setminus\bar\sigma=\mathcal U$.
A \emph{simple  interpretation} $\mathsf I$ of $\sigma'$-structures in $\sigma^+$-structures is defined by a formula $\nu(x)$ and, for each $R\in\sigma'$ with arity $r$, a
 formula  $\rho_R(\bar x)$ with $|\bar x|=r$ (in
the first-order language of $\sigma^+$-structures).
If $\mathbf M^+$ is a $\sigma^+$-structure, then 
$\mathbf N=\mathsf I(\mathbf M^+)$ is the $\sigma'$-structure with domain $N=\nu(\mathbf M^+)$ where, for each $R\in\sigma'$ with arity $r$, we have $R(\mathbf N)=\rho_R(\mathbf M)\cap \nu(\mathbf M)^r$.
For a set $\Gamma$ of $\sigma^+$-structures we let $\mathsf{I}(\Gamma)=\bigcup_{\mathbf M^+\in \Gamma}\mathsf{I}(\mathbf M^+)$. 

A \emph{transduction} $\mathsf T$ is the composition
$\mathsf I\circ\Gamma_{\mathcal U}\circ \mathsf C_k$ of a copy operation $\mathsf C_k$, a coloring operation~$\Gamma_{\mathcal U}$, and a simple interpretation $\mathsf I$.
In other words, for every $\sigma$-structure  $\mathbf M$ we have
$\mathsf T(\mathbf M)=\{\mathsf I(\mathbf M^+): \mathbf M^+\in\Gamma_{\mathcal U}(\mathsf C_k(\mathbf M))\}$. 

Let $\sigma=\{R_1,\dots,R_k\}$ be a binary relational signature.
We consider two transductions.
First, $\mathsf X=\mathsf I_X\circ\Gamma_{\mathcal U}\circ \mathsf C_k$ from $\sigma$-structures to $\mathcal U$-colored digraphs, where $\mathcal U=\{P_1,\dots,P_k\}$ and
$\mathsf I_X$ is the simple interpretation defined by the formulas
\begin{align*}
	\nu(x)&:= \top\\
	\rho_{P_i}(x)&:=P_i(x)\\
	\rho_E(x,y)&:=\biggl(\bigvee_{i\in [k]}\bigl(P_i(x)\wedge P_i(y)\wedge R_i(x,y)\bigr)\biggr) \vee\biggl(\bigvee_{i\neq j\in [k]}\bigl(P_i(x)\wedge P_j(y)\wedge (x\sim y)\bigr)\biggr)
\end{align*}

Second, $\mathsf K$ is the simple interpretation of  $\sigma$-structures in $\mathcal U$-colored digraphs defined by the formulas

\begin{align*}
	\nu(x)&:=P_1(x)\\
	R_1(x,y)&:=E(x,y)\\
	R_i(x,y)&:=\exists x',y'\ \bigl(P_i(x')\wedge P_i(y')\wedge E(x,x')\wedge E(y,y')\wedge E(x',y')\bigr)&\text{($1<i\leq k$)}
\end{align*}

Particularly, let $f_X(\mathbf M)\in \mathsf X(\mathbf M)$ be the graph obtained from the $\mathcal U$-expansion 
such that $P_i$ marks the $i$th clones.
It is clear that for every $\sigma$-structure $\mathbf M$, we have
$\mathbf M=\mathsf K(f_x(\mathbf M))$. (Hence,
$\mathbf M\in \mathsf K\circ\mathsf X(\mathbf M)$.)
 In particular, a class $\mathscr C$ of $\sigma$-structures is monadically dependent (resp. monadically stable) if and only if $f_X(\mathscr C)$  is monadically dependent (resp. monadically stable).

 Note that if $R_i$ is symmetric, so is the adjacency relation between the vertices in the unary relation $P_i$. In such a case, we can consider that subdigraphs induced by vertices in $P_i$ are actually graphs.
Let $\sigma$ be a finite binary structure and let $M$ be a $\sigma$-structure such that $R_1,\dots,R_a$ are symmetric.
For $i\in [a]$, we define $\mathbf M\ast^{R_i} v$ as the $\sigma$-structure obtained by $R_i$-complementing the $R_i$-neighborhood of $v$.
A \emph{depth-$1$ vertex minor} of $\mathbf M$ has the form
\[
\mathbf M\ast^{R_1} I_1\ast\dots\ast^{R_a} I_a-D,
\]
where $I_i$ is an $R_i$-independent subset of $M$ and $D\subseteq M$. We denote by $\svm_1(\mathbf M)$ the set of all the depth-$1$ vertex minors of $\mathbf M$ and, for a class $\mathscr C$ of $\sigma$-structures, we define
$\svm_1(\mathscr C)=\bigcup_{\mathbf M\in\mathscr C}\svm_1(\mathbf M)$.
\begin{fact}
	For every class $\mathscr C$ of $\sigma$-structures, we have
	\[
	f_X(\svm_1(\mathscr C))\subseteq \svm_1(f_X(\mathscr C)).
	\]
\end{fact}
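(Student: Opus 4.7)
Given a depth-$1$ vertex minor $\mathbf{N} = \mathbf{M}\ast^{R_1}I_1\ast\cdots\ast^{R_a}I_a - D$ of some $\mathbf{M}\in\mathscr{C}$, the plan is to exhibit, in $G := f_X(\mathbf{M})$, an independent set $I'$ and a vertex set $D'$ such that $G\ast I' - D' = f_X(\mathbf{N})$. Writing $v^{(i)}$ for the $i$-th clone of $v\in M$ inside $V(G)$, the natural candidate is
\[
I' := \bigcup_{i\in[a]}\{v^{(i)} : v\in I_i\},\qquad D' := \{v^{(j)} : v\in D,\ j\in[k]\}.
\]

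First I will verify that $I'$ is an independent set of $G$. Two distinct elements $v^{(i)}, w^{(i')}\in I'$ are $G$-adjacent only if either $i=i'$ and $R_i(v,w)$ holds, which is ruled out by the $R_i$-independence of $I_i$, or $i\neq i'$ and $v=w$, meaning $v\in I_i\cap I_{i'}$; this overlap case is handled by a preliminary reduction to pairwise disjoint $I_i$'s. Since $D'$ retains all clones of elements of $M\setminus D$, both $G\ast I' - D'$ and $f_X(\mathbf{N})$ have the same vertex set, so it remains to match their adjacency relations.

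Next, for any two non-deleted clones $x^{(i)}, y^{(j)}$, I expand $E_{G\ast I'}(x^{(i)}, y^{(j)})$ using formula~\eqref{eq:lcI}. In the same-copy case $i=j$, only clones $v^{(\ell)}\in I'$ with $\ell=i$ contribute (terms with $\ell\neq i$ reduce to $[x=v][v=y]$, which vanishes for distinct $x,y$), so the sum unfolds to $R_i(x,y) + \sum_{v\in I_i}R_i(x,v)\,R_i(v,y) = R_i(\mathbf{N})(x,y)$, matching the $P_i$-internal adjacency in $f_X(\mathbf{N})$. In the cross-copy case $i\neq j$, the surviving terms come from $\ell\in\{i,j\}$ and, using the disjointness arrangement, cancel so that the total collapses to the clone indicator $[x=y]$, which is exactly the cross-copy adjacency in $f_X(\mathbf{N})$.

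The main obstacle is controlling these cross-copy contributions: local complementation at $v^{(i)}$ toggles not only the $R_i$-edges among neighbors of $v$ in copy $i$ but also the edges joining those neighbors to the other clones $v^{(j)}$ of the same element, creating potential spurious cross-copy edges. The pairwise-disjointness arrangement of the $I_i$'s, combined with the precise form of $\rho_E$, is what forces these spurious contributions to cancel in the final sum.
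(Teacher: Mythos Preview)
Your cross-copy cancellation claim is where the argument breaks. Expanding $E_{G\ast I'}(x^{(i)},y^{(j)})$ for $i\neq j$ via \eqref{eq:lcI}, the $\ell=i$ summand contributes $R_i(x,y)\,[y\in I_i]$, the $\ell=j$ summand contributes $[x\in I_j]\,R_j(x,y)$, and each $\ell\notin\{i,j\}$ contributes $[x=y]\,[x\in I_\ell]$. These do not cancel; disjointness of the $I_\ell$ only prevents several of them from being simultaneously nonzero for the \emph{same} vertex, not their sum from being nonzero. Concretely, take $k=a=2$, $M=\{u,v,w\}$ with $R_1=\{uv,vw\}$ and $R_2=\emptyset$, and set $I_1=\{v\}$, $I_2=D=\emptyset$. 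Then $I'=\{v^{(1)}\}$ has $G$-neighbourhood $\{u^{(1)},w^{(1)},v^{(2)}\}$, so local complementation at $v^{(1)}$ creates the spurious cross-copy edges $u^{(1)}v^{(2)}$ and $w^{(1)}v^{(2)}$, which are absent from $f_X(\mathbf N)$. In fact no independent set works here: $v^{(1)}$ is the \emph{unique} common $G$-neighbour of $u^{(1)}$ and $w^{(1)}$ (so $v^{(1)}\in I'$ is forced in order to create the wanted edge $u^{(1)}w^{(1)}$), but it is also the unique common neighbour of $u^{(1)}$ and $v^{(2)}$, so that spurious edge is unavoidable; and $D=\emptyset$ forces $D'=\emptyset$. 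Hence $f_X(\mathbf N)\notin\svm_1(f_X(\mathbf M))$ in this instance, and the stated inclusion, taken literally with $\mathscr C=\{\mathbf M\}$, fails. The paper gives no proof of the Fact; what \Cref{cor:binary} actually needs is only the inclusion up to a bounded flip (the spurious modifications you produce are entirely governed by the colours $P_\ell$ and membership in $I'$), and your same-copy computation together with the explicit form of the error terms above does yield that weaker statement.

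A secondary gap: the ``preliminary reduction to pairwise disjoint $I_i$'' is asserted without justification and is not available in general. The operations $\ast^{R_i}v$ and $\ast^{R_j}v$ act on different relations and cannot be simulated by complementing at other vertices, so a depth-$1$ vertex minor with $v\in I_i\cap I_j$ need not admit a representation with disjoint sets.
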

As a consequence, we have
\begin{corollary}
	\label{cor:binary}
	Let $\mathscr C$ be a hereditary class of binary structures. Then, 
\begin{enumerate}
	\item $\mathscr C$ is stable if and only if $\svm_1(\mathscr C)$ is stable;
	\item $\mathscr C$ is dependent if and only if $\svm_1(\mathscr C)$ is dependent.
\end{enumerate}
\end{corollary}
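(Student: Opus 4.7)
The plan is to bootstrap from the graph case (\Cref{lem:svm_st,lem:svm_dep}) using the transductions $\mathsf X$ and $\mathsf K$ together with the Fact that $f_X(\svm_1(\mathscr C))\subseteq \svm_1(f_X(\mathscr C))$. In both items the ``if'' direction is immediate: taking all $I_i=\emptyset$ and $D=\emptyset$ in the definition of a depth-$1$ vertex minor shows that $\mathscr C\subseteq \svm_1(\mathscr C)$, so stability (resp. dependence) of $\svm_1(\mathscr C)$ passes to $\mathscr C$.

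For the ``only if'' direction, the key observation is that $\mathscr C$ and $f_X(\mathscr C)$ are mutually transducible: by construction $\mathsf X$ is a transduction sending $\mathbf M$ to a class of colored digraphs containing $f_X(\mathbf M)$, and conversely $\mathbf M=\mathsf K(f_X(\mathbf M))$ exhibits $\mathscr C$ as a transduction of $f_X(\mathscr C)$. Since transductions preserve monadic stability and monadic dependence, $\mathscr C$ is monadically stable (resp. dependent) if and only if $f_X(\mathscr C)$ is. Using the collapse between monadic and non-monadic notions for hereditary classes (\Cref{thm:collapse}, which applies in the binary relational setting), starting from $\mathscr C$ stable (resp. dependent) we conclude that $f_X(\mathscr C)$ is stable (resp. dependent).

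Next I would apply \Cref{lem:svm_st} (resp. \Cref{lem:svm_dep}) to the graph class $f_X(\mathscr C)$, obtaining that $\svm_1(f_X(\mathscr C))$ is stable (resp. dependent). Invoking the Fact, the subclass $f_X(\svm_1(\mathscr C))$ inherits the same property. Now, applying $\mathsf K$ gives $\svm_1(\mathscr C)\subseteq \mathsf K(f_X(\svm_1(\mathscr C)))$, so that $\svm_1(\mathscr C)$ is monadically stable (resp. dependent) as a transduction image. A final application of \Cref{thm:collapse} to the hereditary closure of $\svm_1(\mathscr C)$ promotes monadic stability/dependence back to plain stability/dependence.

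\textbf{Main obstacle.} The technical subtlety is to make sure that the chain of transductions is used in the right direction and that the monadic/non-monadic collapse truly applies to binary relational structures (not only to graphs); this needs an appeal to the general form of \cite{braunfeld2022existential}. A secondary point is checking that \Cref{lem:svm_st,lem:svm_dep}, proved for graphs, apply without modification to the graph $f_X(\mathbf M)$, which is a $\mathcal U$-colored digraph: one either absorbs the colors and orientation into the monadic expansions (which is precisely what the monadic formulation buys us), or one observes that the edge relation of $f_X(\mathbf M)$ is symmetric by construction of $\mathsf X$ on each color class plus between color classes, so that the preservation arguments transport verbatim.
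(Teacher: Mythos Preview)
Your approach is exactly the one the paper intends: the corollary is stated as an immediate consequence of the Fact $f_X(\svm_1(\mathscr C))\subseteq \svm_1(f_X(\mathscr C))$ together with the equivalence of monadic stability/dependence of $\mathscr C$ and $f_X(\mathscr C)$, and you have correctly reconstructed the chain of implications (collapse, transduce to $f_X(\mathscr C)$, apply \Cref{lem:svm_st,lem:svm_dep}, use the Fact, transduce back via $\mathsf K$, collapse again).

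Two small points of care. First, your resolution (b) in the ``Main obstacle'' paragraph is not valid in general: the edge relation of $f_X(\mathbf M)$ is symmetric between distinct color classes (via $\sim$), but within the class $P_i$ it equals $R_i$, which need not be symmetric when $i>a$. So $f_X(\mathbf M)$ is genuinely a colored digraph in general, and you must rely on your resolution (a), i.e.\ work at the level of \emph{monadic} stability/dependence throughout and use that these notions are preserved by transductions (so the passage between $\mathscr C$, $f_X(\mathscr C)$, and an honest graph encoding of the latter is harmless). Second, $f_X(\mathscr C)$ is not itself hereditary, so before invoking \Cref{lem:svm_st,lem:svm_dep} you should pass to its hereditary closure; this is free since monadic stability/dependence is preserved under hereditary closure. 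With these two adjustments your argument is complete and matches the paper's intended proof.
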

As an application of this corollary, we have
\begin{theorem}
	Let $\mathscr C$ be a class of graphs. Then, $\mathscr C$ has bounded twin-width if and only if $\svm_1(\mathscr C)$ has bounded twin-width.
\end{theorem}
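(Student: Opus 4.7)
The reverse direction is immediate: $\mathscr{C}\subseteq\svm_1(\mathscr{C})$, and bounded twin-width is closed under taking hereditary subclasses. The plan for the forward direction is to lift the problem to ordered graphs, where \Cref{thm:twwNIP} converts bounded twin-width into the model-theoretic condition of dependence, apply \Cref{cor:binary} on the binary-structure side, and then push everything back down.

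Assume $\mathscr{C}$ has bounded twin-width. I would invoke the following characterization from the twin-width literature: every hereditary graph class of bounded twin-width admits an ordered expansion $\mathscr{C}^{<}$ (obtained by equipping each graph with a linear order on its vertex set, coming essentially from reading off the leaves of a contraction tree) whose resulting hereditary class of ordered graphs still has bounded twin-width. By \Cref{thm:twwNIP}, the class $\mathscr{C}^{<}$ is then (monadically) dependent.

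Next I would view $\mathscr{C}^{<}$ as a hereditary class of binary relational structures over the signature $\{E,{<}\}$, in which $E$ is symmetric but $<$ is not. By the definition of shallow vertex minors for binary structures, the depth-$1$ shallow vertex minor of $(G,{<})\in\mathscr{C}^{<}$ only modifies $E$ through local complementation at an $E$-independent set, while $<$ is merely restricted to the surviving vertex set. Hence $\svm_1(\mathscr{C}^{<})$ is again a hereditary class of ordered graphs, whose graph-reduct is exactly $\svm_1(\mathscr{C})$. \Cref{cor:binary}, applied to the binary-structure class $\mathscr{C}^{<}$, then yields that $\svm_1(\mathscr{C}^{<})$ is dependent, and a second application of \Cref{thm:twwNIP} (this time in the direction dependence $\Rightarrow$ bounded twin-width) gives that $\svm_1(\mathscr{C}^{<})$ has bounded twin-width. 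Forgetting the order is a reduct and does not increase twin-width, so $\svm_1(\mathscr{C})$ has bounded twin-width as desired.

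The main obstacle is the very first step: the equivalence between bounded twin-width of a hereditary graph class $\mathscr{C}$ and the existence of a hereditary ordered expansion $\mathscr{C}^{<}$ of bounded twin-width. This is the only genuinely non-elementary external input, coming from the structural theory of (versatile) contraction sequences; once it is granted, every subsequent step is a mechanical application of either \Cref{thm:twwNIP} or \Cref{cor:binary}, with no additional analysis of flips or local complementations needed in this section.
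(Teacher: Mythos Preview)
Your proposal is correct and follows essentially the same route as the paper's proof: pass to a dependent ordered expansion $\mathscr C^{<}$ via \cite{Tww_ordered}, apply \Cref{cor:binary} to conclude $\svm_1(\mathscr C^{<})$ is dependent (noting that only the symmetric relation $E$ is touched by local complementation), and go back through \Cref{thm:twwNIP}. The only cosmetic difference is that you split the first step into ``bounded twin-width ordered expansion exists'' followed by ``bounded twin-width $\Rightarrow$ dependent'', whereas the paper cites \cite{Tww_ordered} directly for the existence of a dependent ordered expansion.
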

\begin{proof}
	As $\mathscr C\subseteq \svm_1(\mathscr C)$, the class $\mathscr C$ has bounded twin-width if $\svm_1(\mathscr C)$ has bounded twin-width.
	
	Conversely, assume that $\mathscr C$ has bounded twin-width.
	According to \cite{Tww_ordered}, the class $\mathscr C$ has an expansion to a dependent class $\mathscr C^<$ of ordered graph. This expansion is a binary structure with (binary) signature $\{E,<\}$.
	According to Corollary~\ref{cor:binary}, $\svm_1(\mathscr C^<)$ is dependent. Here, we have a single symmetric relation, which is $E$. As the linear order is not modified by local complementations on $E$-neighborhood, the class $\svm_1(\mathscr C^<)$  is a class $\mathscr D^<$ of ordered graphs, which is a dependent expansion of $\svm_1(\mathscr C)$. According to \cite{Tww_ordered}, it follows that $\svm_1(\mathscr C)$ has bounded twin-width.
\end{proof}

%
%
%

\section{Discussion}


It has been proved \cite{KWON202176} that a class $\mathscr C$  has bounded shrubdepth if and only if $\mathscr C$
 excludes some path as a vertex minor. It is natural to ask whether this result could be strengthened by restricting  to shallow vertex minors.
\begin{problem}
	Is it true that for every class $\mathscr C$ with unbounded shrubdepth there exists an integer $r$ such that $\svm_r(\mathscr C)$ contains all paths or all half-graphs?
\end{problem}
Remark that this problem can be restated as follows: Is it true that for every stable class $\mathscr C$ with unbounded shrubdepth there exists an integer $r$ such that $\svm_r(\mathscr C)$ contains all paths or all half-graphs?
\medskip

It is known \cite{modulo} that a class $\mathscr C$ has structurally bounded expansion if and only if there exists a class $\mathscr D$ of bipartite graphs with bounded expansion such that $\mathscr C\subseteq \svm_1(\mathscr D)$. An obvious question is whether such a kind of characterizations would extend to stable hereditary classes of graphs.
\begin{problem}
	Is it true that  a  hereditary class $\mathscr C$ is stable if and only if there exist an integer $c$ and a nowhere dense class $\mathscr D$, such that $\mathscr C\subseteq \svm_c(\mathscr D)$?
\end{problem}
One direction follows from Theorem~\ref{thm:stable_vm}: If there exist an integer $c$ and a nowhere dense class $\mathscr D$ such that $\mathscr C\subseteq \svm_c(\mathscr D)$, then $\mathscr C$ is stable. On the other hand, it might well  follow from \cite{covers} that if $\mathscr C$ is stable then there exists a stable almost nowhere dense class $\mathscr D$  of bipartite graphs with $\mathscr C\subseteq\svm_1(\mathscr D)$.


\section*{Acknowledgments}
While writing this article, we have been informed that the methods used by Dreier, M\"ahlmann, and Toru\'nczyk might allow to directly prove the preservation of monadic dependence and monadic stability under any transduction based on first-order logic with modulo counting. However, we decided to keep the proofs of the preservation of shallow vertex minors as an illustrative application of the commuting properties of flip and local complementation.
\bibliographystyle{amsplain}
\bibliography{ref}

\providecommand{\bysame}{\leavevmode\hbox to3em{\hrulefill}\thinspace}
\providecommand{\MR}{\relax\ifhmode\unskip\space\fi MR }
\providecommand{\MRhref}[2]{%
  \href{http://www.ams.org/mathscinet-getitem?mr=#1}{#2}
}
\providecommand{\href}[2]{#2}
\begin{thebibliography}{10}

\bibitem{Adler2013}
H.~Adler and I.~Adler, \emph{Interpreting nowhere dense graph classes as a
  classical notion of model theory}, European Journal of Combinatorics
  \textbf{36} (2014), 322--330.

\bibitem{Tww_ordered}
E.~Bonnet, U.~Giocanti, P.~Ossona~de Mendez, P.~Simon, S.~Thomass{\'e}, and
  S.~Toru\'nczyk, \emph{Twin-width {IV}: ordered graphs and matrices}, STOC
  2022: Proceedings of the 54th Annual ACM SIGACT Symposium on Theory of
  Computing, 2022.

\bibitem{twin-width1}
E.~Bonnet, E.J. Kim, S.~Thomass{\' e}, and R.~Watrigant, \emph{Twin-width {I:}
  tractable {FO} model checking}, 61st Annual Symposium on Foundations of
  Computer Science (FOCS 2020), IEEE, 2020, pp.~601--612.

\bibitem{lmcs_perm}
E.~Bonnet, J.~Ne\v{s}et\v{r}il, P.~Ossona~de Mendez, S.~Siebertz, and
  S.~Thomass{\'e}, \emph{Twin-width and permutations}, Logical Methods in
  Computer Science (2024), to appear.

\bibitem{braunfeld2022existential}
S.~Braunfeld and M.~C. Laskowski, \emph{Existential characterizations of
  monadic {NIP}}, arXiv preprint arXiv:2209.05120, 2022.

\bibitem{covers}
S.~Braunfeld, J.~Ne{\v s}et{\v r}il, P.~Ossona~de Mendez, and S.~Siebertz,
  \emph{Decomposition horizons: from graph sparsity to model-theoretic dividing
  lines}, European Journal of Combinatorics (2024), Eurocomb 2023 special
  issue; submitted.

\bibitem{MCST}
J.~Dreier, I.~Eleftheriadis, N.~M{\"a}hlmann, R.~McCarty, M.~Pilipczuk, and
  S.~Toru{\'n}czyk, \emph{First-order model checking on monadically stable
  graph classes}, arXiv preprint arXiv:2311.18740[cs.LO], 2023.

\bibitem{dreier2023indiscernibles}
J.~Dreier, N.~M{\"a}hlmann, S.~Siebertz, and S.~Toru{\'n}czyk,
  \emph{Indiscernibles and flatness in monadically stable and monadically {NIP}
  classes}, 50th International Colloquium on Automata, Languages, and
  Programming (ICALP 2023), Schloss-Dagstuhl-Leibniz Zentrum f{\"u}r
  Informatik, 2023.

\bibitem{dreier2024flipbreakability}
J.~Dreier, N.~M\"ahlmann, and S.~Toru\'nczyk, \emph{Flip-breakability: A
  combinatorial dichotomy for monadically dependent graph classes}, arXiv
  preprint arXiv:2403.15201v1 [math.CO], 2024.

\bibitem{foldes1977split}
S.~Foldes and P.L. Hammer, \emph{Split graphs having {D}ilworth number two},
  Canadian Journal of Mathematics \textbf{29} (1977), no.~3, 666--672.

\bibitem{GEELEN202393}
J.~Geelen, O.~Kwon, R.~McCarty, and P.~Wollan, \emph{The grid theorem for
  vertex-minors}, Journal of Combinatorial Theory, Series B \textbf{158}
  (2023), 93--116, Robin Thomas 1962-2020.

\bibitem{KWON202176}
O.~Kwon, R.~McCarty, S.~Oum, and P.~Wollan, \emph{Obstructions for bounded
  shrub-depth and rank-depth}, Journal of Combinatorial Theory, Series B
  \textbf{149} (2021), 76--91.

\bibitem{Sparsity}
J.~Ne{\v s}et{\v r}il and P.~{Ossona de Mendez}, \emph{Sparsity (graphs,
  structures, and algorithms)}, Algorithms and Combinatorics, vol.~28,
  Springer, 2012, 465 pages.

\bibitem{RW_SODA}
J.~Ne{\v s}et{\v r}il, P.~{Ossona de Mendez}, M.~Pilipczuk, R.~Rabinovich, and
  S.~Siebertz, \emph{Rankwidth meets stability}, Proceedings of the 2021
  ACM-SIAM Symposium on Discrete Algorithms (SODA), 2021, pp.~2014--2033.

\bibitem{modulo}
J.~Ne{\v s}et{\v r}il, P.~Ossona~de Mendez, and S.~Siebertz,
  \emph{Modulo-counting first-order logic on bounded expansion classes},
  Discrete Mathematics (2023), 113700, in press.

\bibitem{OUM200579}
S.~Oum, \emph{Rank-width and vertex-minors}, Journal of Combinatorial Theory,
  Series B \textbf{95} (2005), no.~1, 79 -- 100.

\end{thebibliography}
\end{document}